\theoremstyle{definition}
\newtheorem{definition}{Definition}
\newtheorem{lemma}{Lemma}
\newtheorem{proposition}{Proposition}
\newtheorem{theorem}{Theorem}
\newtheorem{example}{Example}
\newtheorem{remark}{Remark}
\newcommand{\bitem}{\begin{itemize}}
\newcommand{\eitem}{\end{itemize}}
\newcommand{\benum}{\begin{enumerate}}
\newcommand{\eenum}{\end{enumerate}}
\newcommand{\beq}{\begin{equation}}
\newcommand{\eeq}{\end{equation}}
\newcommand{\abs}[1]{\left| #1 \right|}
\newcommand{\norm}[1]{\left\|#1\right\|}
\newcommand{\C}{\mathbb{C}}
\newcommand{\N}{\mathbb{N}}
\newcommand{\R}{\mathbb{R}}
\newcommand{\Z}{\mathbb{Z}}
\newcommand{\calA}{\mathcal{A}}
\newcommand{\calC}{\mathcal{C}}
\newcommand{\calF}{\mathcal{F}}
\newcommand{\calH}{\mathcal{H}}
\newcommand{\calM}{\mathcal{M}}
\newcommand{\calP}{\mathcal{P}}
\newcommand{\calX}{\mathcal{X}}
\newcommand{\abss}[1]{\left\vert #1 \right\vert}
\newcommand{\normm}[1]{\left\Vert #1 \right\Vert}
\newcommand{\set}[1]{\left\lbrace #1\right\rbrace}
\newcommand{\sse}{\subseteq}
\newcommand{\sprod}[1]{\left\langle #1 \right\rangle}
\newcommand{\ind}{\mathds{1}}
\newcommand{\wstarto}{\stackrel{*}{\rightharpoonup}}
\newcommand{\wlim}{\mathrm{w\mbox{-}}\lim}
\DeclareMathOperator{\clonv}{\overline{conv}}
\DeclareMathOperator{\dist}{dist}
\DeclareMathOperator{\sgn}{sgn}
\DeclareMathOperator{\supp}{supp}
\DeclareMathOperator{\id}{id}
\DeclareMathOperator{\ran}{ran}
\DeclareMathOperator{\re}{Re}
\DeclareMathOperator{\sinc}{sinc}
\newcommand{\st}{\text{ subject to }}
\newcommand\clsp{\overline{\mathrm{span}}}
\title{Compressed Sensing for Analog Signals}
\author{Bernhard G. Bodmann \and Axel Flinth \and Gitta Kutyniok}
\begin{document}

\maketitle

\begin{abstract}
In this paper we develop a general theory of compressed sensing for analog signals,
in close similarity to prior results for vectors in finite dimensional spaces that are sparse
in a given orthonormal basis.
The signals are modeled by functions in a reproducing kernel Hilbert space.
Sparsity is defined as the minimal number of terms in expansions based on the kernel functions. 
Minimizing this number is  under certain conditions equivalent to minimizing an atomic norm, the pre-dual of the supremum norm
for functions in the Hilbert space. The norm minimizer is shown to exist based on a compactness argument.
Recovery based on minimizing the atomic norm is robust and stable, so it provides controllable accuracy
for recovery when the signal is only approximately sparse and the measurement is corrupted by noise.
 As applications of the theory, we include results on
the recovery of sparse bandlimited functions and functions that have a sparse 
short-time Fourier transform.
\end{abstract}

\section{Introduction}

Sparse regularization techniques are a nowadays  common and successful approach to solve ill-posed inverse problems.
Due to constraints imposed by an application one often faces the problem of having access to only
very few measured quantities, sometimes far less than the ambient dimension of the space in which the solution to the problem resides. The linear version of this problem -- solving an underdetermined linear problem for a solution being sparse in a prescribed
representation system -- has been extensively studied in the area of what is called compressed sensing
or -- seemingly more general, but often used interchangeably -- sparse recovery \cite{DDEK12,candes2006stable,donoho2006compressed}.

The sparse recovery problem  has so far been predominantly treated in discrete settings, following the
tacit assumption that it is inherent in the digital domain only. However, real-world signals are foremost
of analog nature, calling for an adapted algorithmic framework to this structure. Let us exemplarily
consider the problem of determining the dominant frequencies of a musical score. If we assume that the
range of possible frequencies is a discrete set -- e.g., that the instruments are perfectly tuned according to the well-tempered scale and
that the musicians playing the instruments are not making the slightest of mistakes -- we can without
problem convert this into a discrete setting. As soon as the frequencies deviate slightly from the
fixed grid -- i.e., if the instruments are not perfectly tuned -- it becomes impossible to precisely
represent the scores as discrete vectors. This is not merely an academic issue: As an illustrative example,
consider the vector
\begin{align}
	v= (e^{\frac{i\theta k}{m}})_{k=-m}^m \in \C^{2m+1}. \label{eq:v_basismismatch}
\end{align}
Despite its apparent sparsity in the frequency domain, its discrete Fourier transform $\widehat{v}$ has
many nonzero entries as soon as $\theta$ is not an integer multiple of $2\pi$. This phenomenon occurs
much more widely, and is commonly called the \emph{basis mismatch problem} \cite{chi2011sensitivity}.

\vspace*{0.15cm}

In this paper, we will develop a sparse recovery approach suitable for analog signals and provide
precise recovery guarantees, even in the situation of the signals being impacted by noise. The key
ideas of our framework are to consider reproducing kernel Hilbert spaces as ambient spaces with
the kernel functions being the elements of the representation system,  and to solve a total-variation minimization
problem for sparse atomic measures as an extension of classical atomic measures.

\subsection{Desiderata for Analog Signals}

A classical sparse recovery problem aims to recover a vector $x \in \R^n$ from linear, non-adaptive,
and underdetermined measurements $b := Ax$, $A \in \R^{m \times n}$ with $n$ being much larger than $m$.
The key constraint imposed on $x$ is {\em sparsity}, namely that either
\[
\|x\|_0 := \#\{i : x_i \neq 0, i=1,\ldots,n\}
\]
is small or that there exists a representation system $\Phi \in \R^{n \times k}$ -- often coined
{\em dictionary} and assumed to be an orthonormal basis or more generally a frame -- such that
$x = \Phi c$ and $\|c\|_0$  is small. One of the most
popular and well-explored recovery approaches is to solve a convex optimization problem,
more precisely, to find a minimizer for the $\ell_1$-norm that achieves
\[
\min_{\tilde{x} \in \R^n} \|\tilde{x}\|_1 \st A\tilde{x}=b.
\]
Under certain conditions \cite{FoucartRauhut2013}, such a minimizer is indeed the desired solution.

Aiming for a true analog setting, the following four desiderata can be identified:
\bitem
\item[(D1)] {\it Hilbert space model.} A classical mathematical model for analog signals consists of
reproducing kernel Hilbert spaces, one particularly prominent example being the reproducing
kernel Hilbert space of bandlimited functions.
\item[(D2)] {\it General measurements.} In contrast to the classical setting, in which measured quantities
are indexed by a finite or countable, discrete set, we wish to include measurements whose index
set is equipped with a more general metric structure. This includes the case of a measurement whose range is a reproducing kernel Hilbert space.
\item[(D3)] {\it Reconstruction in infinite dimensions.} The reconstruction should
not impose a discretization, but remain in the continuum setting. Solving an appropriate convex optimization problem based on the measurement should allow robust and stable
recovery of the analog signal with respect to the relevant norm.
\item[(D4)] {\it Analog sparsity.} The notion of sparsity itself should not be linked to a specific
discrete set, but in fact -- following the analog spirit -- allow for sparse representations
within an uncountably infinite dictionary.
\eitem

Besides developing an appropriate model setting and recovery strategy, the main goal will consist
in providing precise recovery guarantees. In particular, we will not address the problem of numerically 
solving the infinite dimensional optimization problems we will investigate. It should be noted that in some special cases such as when the linear measurements are
(related to) Fourier samples (see, e.g., \cite{candes2014super,tang2013compressed,dossal2017sampling,heckel2016super}) or
piecewise linear measurement functions \cite{flinthweiss2017exact}, the infinite dimensional problem can in fact be 
solved exactly through finite-dimensional problems. Deriving such statements for our more general situation is beyond the scope of
this paper and will be studied in future work.

\subsection{Related Work}

The discrete paradigm of sparse recovery was challenged in the ground-breaking paper
\cite{chandrasekaran2012convex}, where \emph{atomic norms} with respect to infinite
dictionaries where introduced. Although the concept of atomic norms was certainly  not new
in itself, the authors argued that many of the norms used for structured regularization, i.e.,
the nuclear norm, could be viewed as the atomic norms of certain dictionaries. We will exploit and 
extend the idea of atomic norms in order to realize our envisaged properties (D1)--(D4). One deficiency of
the previously investigated atomic norms is that the norms still relied on discrete representations; in the particular example of
the nuclear norm of a matrix, it is equal to the sum of its finitely many singular values.

A different, and more truly ``non-discrete'' type of  sparse recovery problem
was already treated in \cite{donoho1992superresolution}, before much of the literature on compressed sensing.
The idea in this work was to consider the recovery of a \emph{signed measure} $\mu$ of the form
\begin{align}
	\mu= \sum_{i=1}^s c_i \delta_{x_i} \label{eq:sparseMeasure}
\end{align}
for $c \in \R^s$ and $x_i \in \Omega$, rather than a discretized vector. Hence, the theory does
address (D4). The other desiderata were not considered in that work; the article only treats the problem
of recovering a measure from a part of its Fourier transform $\widehat{\mu}$,
\begin{align} \label{eq:FourierMeasurements}
	\widehat{\mu}(\xi) = \int_{0}^1 \exp(2\pi  i \xi x) d\mu(x),
\end{align}
thus not using a more general type of measurement as in (D2). Moreover, rather than proposing a concrete recovery guarantee,
the article provides a condition on the support of the measure ensuring the \emph{possibility}
of stable recovery of it in form of an oracle inequality, so that (D3) is not fulfilled. The
set-up also does not use explicit signal models as in (D1).

\vspace*{0.15cm}

The analogue of the $\ell_1$-norm in this setting is the total variation norm, or simply $TV$-norm. Having
the success of the $\ell_1$-minimization from the discrete realm in mind, it seems plausible
that, given linear measurements $b= A\mu$, a $TV$-norm minimization of the form
\begin{align} \label{eq:atomNormIntro}
	\min \norm{\mu}_{TV} \st A\mu =b
\end{align}
should be successful at recovering a measure of the form \eqref{eq:sparseMeasure}. Notice
that the model of a measure resolves the basis mismatch problem described before by
viewing $v$ from \eqref{eq:v_basismismatch} as samples of the ``perfectly sparse'' measure
$\delta_{\theta}$.

One of the first publications giving a concrete treatise of this optimization problem,
in particular, giving recovery guarantees, is \cite{de2012exact}. In this work, the authors
consider measurements of the form
\begin{align*}
	\int_\Omega m_k(x) d\mu(x),
\end{align*}
where $(m_k)_{k=1}^m$ is a so-called \emph{M-system}, meaning that, for each $K$, any
function of the form
\begin{align*}
	\sum_{k=1}^K c_k m_k
\end{align*}
possesses at most $K$ zeroes in $\Omega$. Notice that various relevant measurement families
fall into this category such as (trigonometric) polynomials. The authors provide guarantees
for the exact recovery of both positive and more general measures, the latter obeying a
minimal separation condition.

This setting was treated further in the two contributions \cite{candes2014super} and \cite{tang2013compressed}.
The authors of \cite{candes2014super} assumed that all coefficients $(A\mu)_k$
for $\abs{k} \leq m$ are known; whereas in \cite{tang2013compressed} it is assumed that only
a random subsampling is given. In both papers it is then proven that under a condition on the
minimal separation of the $x_i$'s, depending on $m$,  the solution of \eqref{eq:atomNormIntro}
equals \eqref{eq:sparseMeasure}. In comparison to \cite{de2012exact}, the number of measurements needed for resolving measures with a certain separation is much smaller. It should also be mentioned that the authors provide a method for numerically resolving the infinite dimensional $TV$-minimization problem exactly using a finite-dimensional $SDP$.

 Relating again to our list of desiderata, in these three articles neither our model
assumption (D1) is satisfied, nor are  infinite indexed measurement sets allowed.
But within the model setting considered in \cite{candes2014super} and \cite{tang2013compressed},
(D3) and (D4) are indeed fulfilled.

\vspace*{0.15cm}

Since the appearance of \cite{candes2014super} and \cite{tang2013compressed}, research progress
on problems of the form \eqref{eq:atomNormIntro} has in fact been quite limited. A few articles
on purely theoretical considerations, such as existence and structure of the solutions, on
the behavior of discretized versions of \eqref{eq:atomNormIntro} or on questions of identifiability
(e.g., if a measurement operator $A$ is injective on the set of sparse atomic measures) have been
published \cite{unser2017splines, DuvalPeyre2015,flinthweiss2017exact, traonmillin2017, bredies2013inverse}. But
concerning the question of concrete recovery guarantees, the literature is quite scarce.

There are a few notable exceptions. We would like to mention \cite{heckel2016super} and \cite{dossal2017sampling},
in which two-dimensional Fourier measurements are considered. The first paper analyzes the situation of
sampling on a rectangular grid, whereas the second paper studies sampling along radial lines. Although
the articles without doubt contain important and substantial new ideas, the specific Fourier nature of the
measurements allow their arguments to make heavy use of the results from \cite{candes2014super,tang2013compressed}. Hence, they fail to meet the desideratum (D2).

During the final preparation of this manuscript, we also became aware of \cite{Aubel2018}. This paper treats a somehow different type of measurements, namely short time Fourier measurements of measures on $\R$ (or the torus). The authors furthermore allow for the measures to have countably many support points, which by itself imposes many delicate problems not present in the model of finitely many support points considered here. In \cite{Aubel2018}, all of the desiderata are met with the exception of (D2), since the authors only
treat the case of short time Fourier measurements.

\vspace*{0.15cm}

Let us finally comment on another line of work which constitutes a generalization of compressed sensing to infinite dimensions (see, e.g., \cite{adcock2016generalized, eldar2009compressed}). In these papers, it is assumed that the signal $f$ has a sparse representation in a countable representation system $(\psi_k)_{k\in \Z}$, e.g.,
\begin{align*}
	f = \sum_{k\in \Z} d_k \psi_k
\end{align*}
with $d$ being (approximately) sparse (see, for example, \cite{adcock2016generalized,eldar2009compressed}).  This model is quite general,
since it includes cases such as assuming $f$ to be piecewise smooth and choosing $(\psi_k)_{k\in \Z}$ to be equal to a wavelet system, which
then leads to $d$ being approximately sparse. It however cannot capture sparsity in the sense of (D4). 

The main ideas of this line of work is to carefully sample the function $f$ using filters $(s_n)_n$, resulting in a countable compressed sensing problem of the form
\begin{align*}
	\sprod{s_\ell,f } = \sum_{k\in \Z} d_k \sprod{s_\ell,\psi_k}, \ell \in \Z.
\end{align*}
This problem is subsequently, again with great care, approximated with finite dimensional compressed sensing problems. These can be solved with methodologies such as Basis Pursuit, leading to a provably stable recovery procedure. This philosophy is quite different from ours: whereas this approach first discretize and then optimize, we will analyze an infinite dimensional optimization problem directly.  Hence, these
approaches do also not fulfill (D3). We believe that these types of theories should be considered to be entirely separate from, rather than competing with, the one we are treating, since it is designed for and applies to an entirely different class of signals.

\subsection{Our Contribution}

Let us for intuition purposes first focus on the case of Fourier measurements \eqref{eq:FourierMeasurements}.
It is then clear that the recovery of $\mu$ from $M \mu$ is equivalent to the task of recovering the
parameters $(c_i, x_i)$ from samples
\begin{align*}
	\sum_{\ell=1}^s c_i e^{2\pi i k x_i}, \quad k=-m, \dots, m
\end{align*}
of the sum of complex exponentials $P(\xi) = \sum_{\ell=1}^s c_i e^{2\pi i \xi x_i}$. Historically, this
problem was in fact already studied by Prony in the late 19th century. The algorithm he proposed is of
algebraic nature and consists of re-encoding the samples into a polynomial, which then can be proven to have
zeroes exactly in the points $x_i$. In recent years, it has experienced quite a renaissance, for instance, 
in the context of signals with finite rate of innovation
\cite{vetterli2002sampling}.  It is commonly referred to as Prony's algorithm \cite{plonka2014prony}.
This algorithm is however very specifically tailored to the special case of Fourier measurements on a
uniform grid, and is furthermore quite sensitive to noise.

A different interpretation of the problem --~which is of inevitable importance for us~-- is that the sum
of complex exponentials $P$ can be seen as a signal having a sparse representation in the
\emph{dictionary of complex exponentials}
\begin{align*}
	\Phi = \left(\exp(2\pi i x(\cdot))\right)_{x\in [0,1]}.
\end{align*}
With the methodology of \cite{chandrasekaran2012convex} in mind, it seems natural to minimize the
atomic norm over all elements $Q$ (in some appropriately chosen ambient space) having the same --~for now
discrete~-- measurements $(Q(k))_{k=-m}^m$ as $P$, with the goal to find the parameters $(c_i, x_i)$
sparsely representing the element $P$.

In this article, we will follow this general strategy. Our ambient model spaces will be reproducing kernel
Hilbert spaces $\mathcal{H}$, thereby satisfying (D1). The kernel functions $(K_x)_{x \in \mathcal{H}}$ of
those Hilbert spaces will serve as dictionary functions, allowing for an analog version of sparsity (D4).
 The atomic norm with
respect to such dictionaries will be defined through a $TV$-minimization problem. Hence problems of the form
\eqref{eq:atomNormIntro} -- extended to countably infinite indexed measurement sets (D2) -- can readily
be interpreted as atomic norm minimizations. 

This recovery strategy then also fulfills (D3).
Considering this framework for sparse recovery for analog signals, we derive precise recovery guarantees
in the noiseless (Theorem \ref{th:MainRecovery}) and noise-impacted (Theorems \ref{th:Stability} and \ref{th:DeltaMinimize})
situations. The general ideas are the same as the line of work started by \cite{candes2014super, tang2013compressed} -- however, formulating the problem in reproducing kernel Hilbert spaces sheds new light on it, and allows for unification. In particular, the work \cite{candes2014super} can be seen as a special case of the theory. We will also be able to derive results for measurement operators which have not been considered before, related to bandlimited functions and functions with a sparse time-frequency representation.

\subsection{Outline}

This  paper  is  organized  as  follows. In Section \ref{section:atomic norms}, we introduce and prove a few fundamental properties of atomic norms on reproducing kernel Hilbert spaces. Then, in Section \ref{sec:rec}, we present our recovery guarantees. Finally, in Section \ref{sec:appl}, we apply the framework of the previous sections to two concrete settings.

\section{Reproducing Kernel Hilbert Spaces and Atomic Norms} \label{section:atomic norms}

To set up our model situation according to (D1), we start by recalling the key definitions and
notation related to reproducing kernel Hilbert spaces. For the sparse recovery approach, we
in addition require a suitable norm on this space. The so-called atomic norm unified various
sparse recovery settings in Euclidean space, and it seems conceivable that such a type of 
norm will also be highly beneficial for our general setting. This atomic norm was originally 
defined  in \cite{chandrasekaran2012convex} in a slightly different, more geometric manner. 
In the sequel, we will introduce a version specifically adapted to the reproducing kernel 
Hilbert space setting. 

\subsection{Reproducing Kernel Hilbert Spaces Revisited} 

Let us start with recalling the definition of a reproducing kernel Hilbert space.

\begin{definition}
A \textit{reproducing kernel Hilbert space} $\mathcal H$ over a set $\mathcal X$
is a set of functions on $\mathcal X$ forming a Hilbert space such that for each
 $x \in \mathcal X$, there is a  function $K_x \in \mathcal H$ that produces the point
 evaluation of any $f \in \mathcal H$ at $x$ via the inner product with $K_x$, i.e., 
\begin{equation}\label{eq:RKdef}
   f(x) = \langle f, K_x \rangle \, .
\end{equation}
The \textit{reproducing kernel} is the function $K: \mathcal X \times \mathcal X \to \mathbb C$
given by $K(x,y)=K_x(y), x, y \in \mathcal X$. We say that $\mathcal H$ has a \textit{unit-norm 
reproducing kernel}, if $K(x,x) = 1$ for all $x \in \mathcal X$.
\end{definition}

The inner product of $\mathcal H$ induces the usual norm and the topology with respect to which $\mathcal H$ is complete.
From the fact that each $K_x$ furnishes the point evaluation, one can deduce that the span of
$\{K_x\}_{x \in \mathcal X}$ is dense in $\mathcal H$.
If the kernel $K$ is not unit-norm, then one can switch to a normalized
one, $\widetilde K(x,y) = K(x,y)/(K(x,x)K(y,y))^{1/2}$
and pass the weight onto the functions.
This defines a Hilbert space isomorphism $\tilde\iota: \mathcal H \to \widetilde{\mathcal H},
\tilde\iota f(x) = \sqrt{K(x,x)} f(x), x \in \mathcal X$ to the Hilbert space $\widetilde{\mathcal H}$
with unit-norm reproducing kernel $\widetilde K$. This change is useful to show that each function
has a norm-convergent expansion in an at most countable number of kernel functions.
Assuming a unit-norm kernel function, then
the weak greedy algorithm described by Temlyakov \cite{temlyakov} provides 
a norm-convergent  expansion for each $\tilde f \in \widetilde{\mathcal H}$,
and by unitarity, also for each $f \in \mathcal H$ in terms of an at most
countable number of kernel
functions $\{K_x\}_{x \in \mathcal X}$.

 Although each function $f$ can be represented as a countable norm-convergent sequence, 
 it will turn out to be convenient to allow also expansions of the form
\begin{align} \label{eq:Integral}
	K * \mu \equiv \int_\calX K_x d\mu(x),
\end{align}
where $\mu$ is an element of $\calM(\calX)$, the space of regular complex Borel measures 
of bounded total variation on a locally compact Hausdorff space $\calX$.
For the purposes of this paper, we understand the integral of the vector-valued function $x \mapsto K_x$ with respect to the complex measure $\mu$ in the sense of Bochner 
\cite{yosida},
for which it is sufficient to assume that $\mathcal H$ is separable, $K$ is a unit-norm kernel, and each $f \in \calH$ is Borel measurable  \cite{pettis,talagrand}.
A simple condition that ensures measurability  is to have $x \mapsto K_x$ continuous from $\calX$ to $\calH$, which implies through the Cauchy-Schwarz inequality and
the reproducing kernel identity (\ref{eq:RKdef}) that each $f \in \calH$ is continuous.
All of the examples we consider have this property.

\subsection{Atomic Norms on Reproducing Kernel Hilbert Spaces}


The space $\calM(\calX)$ is naturally equipped with the total-variation norm, which assigns to 
$\mu \in \calM(\calX)$ the value
\begin{align*}
	 \norm{\mu}_{TV} \equiv |\mu|(\calX)  = \sup_{ \substack{ \bigcup_{i=1}^n E_i= \calX \\
	E_i \text{ disjoint.}} } \sum_{i=1}^n \abs{\mu(E_i)} \, ,
\end{align*}
where the elements of the partitions of $\calX$  are Borel measurable sets.
The Riesz-Markov-Kakutani theorem identifies this normed space  with the dual of the space $\calC_0(\calX)$ of continuous functions vanishing at infinity. In the following, we will frequently view $\calM(\calX)$ as a locally convex vector space, equipped with the weak-$*$-topology that it obtains as the dual of $\calC_0(\calX)$.

We may now define the atomic norm in $\calH$.

\begin{definition}
Let $\calX$ be a locally compact Hausdorff space and $\calM(\calX)$ the space of regular complex Borel measures with bounded total variation.
Let $\mathcal H$ be a separable reproducing kernel Hilbert space of Borel measurable functions on $\calX$ with a unit-norm reproducing kernel $K$.
The \textit{atomic norm} of $\calH$ associates
with each $f \in \mathcal H$ the value
\begin{align*}
   \|f\|_{A} = \inf \bigl\{ \norm{\mu}_{TV} :  \mu \in \calM(\calX), K*\mu = f  \bigr\}   \tag{$\calP$} \, .
\end{align*}
\end{definition}

By the triangle inequality for the norm, the Radon-Nikodym property \cite{pettis,talagrand},
and the normalization of each $K_x$,
\begin{align*}
\norm{K * \mu} = \norm{\int_\calX K_x d\mu(x)} 
	\le \int_\calX \| K_x \| d\abs{\mu}(x) = |\mu|(\calX) \, ,
\end{align*}
the map $D: \mu \mapsto K * \mu$ is (strongly) continuous. This allows us to prove the following lemma.
\begin{lemma} \label{lem:InfAttain} In addition to the assumptions on $\calX$ and $\calH$, let $\calX$ be second countable and $\calH \cap \calC_0(\calX)$
be dense in $\calH$,
	then for $f \in \calH$ with $\norm{f}_A < \infty$, there exists a $\mu \in \calM(\calX)$ with $f = K * \mu$ and $\norm{\mu}_{TV} = \norm{f}_A$.
\end{lemma}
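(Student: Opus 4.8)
The plan is a direct weak-$*$ compactness argument. First I would take a minimizing sequence $(\mu_n)_n \subset \calM(\calX)$ with $K * \mu_n = f$ and $\norm{\mu_n}_{TV} \to \norm{f}_A$; since $\norm{f}_A < \infty$ we may assume all $\mu_n$ lie in the closed ball $B_R = \set{\mu : \norm{\mu}_{TV} \le R}$ for some finite $R$. By the Riesz--Markov--Kakutani identification $\calM(\calX) = \calC_0(\calX)^*$, this ball is weak-$*$ compact by Banach--Alaoglu. The hypotheses that $\calX$ is locally compact Hausdorff and second countable make $\calC_0(\calX)$ separable, so the weak-$*$ topology on $B_R$ is metrizable; hence $(\mu_n)_n$ has a weak-$*$ convergent subsequence $\mu_{n_k} \wstarto \mu$ with $\mu \in B_R$.

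The second ingredient is lower semicontinuity of the total-variation norm, which as the dual norm is weak-$*$ lower semicontinuous, so that $\norm{\mu}_{TV} \le \liminf_k \norm{\mu_{n_k}}_{TV} = \norm{f}_A$.

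The main point --- and the step where the density hypothesis that $\calH \cap \calC_0(\calX)$ is dense in $\calH$ becomes essential --- is to verify that the limit measure is still a preimage of $f$, i.e. $K * \mu = f$. For this I would show that the synthesis map $D : \mu \mapsto K * \mu$ is continuous from $(B_R, \text{weak-}*)$ into $\calH$ equipped with its weak topology. Fix $g \in \calH$. Pulling the Bochner integral through the inner product and invoking the reproducing identity \eqref{eq:RKdef} gives $\ip{K*\mu}{g} = \int_\calX \ip{K_x}{g}\,d\mu(x) = \int_\calX \overline{g(x)}\,d\mu(x)$. When $g \in \calH \cap \calC_0(\calX)$ the integrand $\overline{g}$ lies in $\calC_0(\calX)$, so $\mu \mapsto \ip{K*\mu}{g}$ is precisely a weak-$*$ test functional and hence weak-$*$ continuous. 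For arbitrary $g \in \calH$ I would choose $g_j \in \calH \cap \calC_0(\calX)$ with $g_j \to g$ in $\calH$ and use the uniform estimate $\abs{\ip{K*\mu}{g - g_j}} \le \norm{K*\mu}\,\norm{g - g_j} \le R\,\norm{g-g_j}$, valid for all $\mu \in B_R$ by the operator bound $\norm{D} \le 1$ recorded just before the lemma. Thus $\mu \mapsto \ip{K*\mu}{g}$ is a uniform limit on $B_R$ of weak-$*$ continuous functionals, and is therefore itself weak-$*$ continuous on $B_R$. Applying this along the subsequence yields $\ip{K*\mu}{g} = \lim_k \ip{K*\mu_{n_k}}{g} = \ip{f}{g}$ for every $g \in \calH$, whence $K*\mu = f$.

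Finally, since the limit $\mu$ is feasible for the problem $(\calP)$, the definition of the infimum forces $\norm{f}_A \le \norm{\mu}_{TV}$, and combined with the lower-semicontinuity estimate this gives $\norm{\mu}_{TV} = \norm{f}_A$, so the infimum is attained at $\mu$. I expect the only genuine obstacle to be the weak-$*$-to-weak continuity of $D$: weak-$*$ convergence controls pairings only against $\calC_0(\calX)$, whereas the identity $K*\mu = f$ must be tested against all of $\calH$, whose elements are neither assumed to be continuous nor to vanish at infinity. The density assumption together with the contraction bound $\norm{D}\le 1$ is exactly the mechanism that bridges this gap.
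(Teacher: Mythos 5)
Your proposal is correct and follows essentially the same route as the paper: weak-$*$ sequential compactness of the total-variation ball (Banach--Alaoglu/Helly via separability of $\calC_0(\calX)$ from second countability), combined with the observation that the constraint $K*\mu=f$ survives weak-$*$ limits because it can be tested against the dense set $\calH\cap\calC_0(\calX)$. The paper phrases this last step as weak-$*$ closedness of $D^{-1}(\{f\})=\bigcap_{g}M_g$, while you phrase it as weak-$*$-to-weak continuity of $D$ on the ball via a uniform approximation; these are the same argument, and your explicit use of lower semicontinuity of the dual norm only spells out what the paper leaves implicit.
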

\begin{proof}
	What we need to prove is that $(\calP)$ has a minimizer. By $\calX$ being second countable, $\calC_0(\calX)$ is separable.
	Take a number $R \in \R$ with $R> \norm{f}_A$.
	Using Helly's version of Banach-Alaoglu for dual spaces of separable Banach spaces, the closed ball $\{ \mu \in \calM(\calX): \|\mu\|_{TV} \le R\}$ is  weak-* sequentially compact. Moreover, $D^{-1}(\{f\})$ is weak-* closed because it is the intersection of the weak-* closed sets
	$M_g = \{\mu: \langle K * \mu, g\rangle = \int_{\calX} \overline{g} d\mu = \langle f, g \rangle\}$
	over all  $g \in \mathcal H \cap \calC_0(\calX)$.
	Hence, any minimizing sequence for $(\calP)$ has a weak-* convergent subsequence. The limit $\mu$ of this subsequence then satisfies
	$K*\mu=f$ and $\|\mu\|_{TV} = \| f \|_A$.
	%
	
\end{proof}

Although the atomic norm does not appear to be discussed in the context of Hilbert spaces with reproducing kernel, its dual is more commonly known.

\begin{proposition} \label{prop:dualNorm}
Given $g \in \mathcal H$, then the dual of the atomic norm
is
$$
  \|g\|_A^* = \sup_{x \in \mathcal X} |g(x)| = \|g\|_\infty \, .
$$
\end{proposition}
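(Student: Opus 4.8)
The plan is to prove the two inequalities $\|g\|_A^* \ge \|g\|_\infty$ and $\|g\|_A^* \le \|g\|_\infty$ separately, working directly from the definition of the dual norm relative to the Hilbert space pairing, namely $\|g\|_A^* = \sup\{ |\langle f, g\rangle| : f \in \calH,\ \|f\|_A \le 1\}$. The central computational identity I would establish first is that for any $\mu \in \calM(\calX)$,
\begin{align*}
	\langle K * \mu, g\rangle = \int_\calX \langle K_x, g\rangle\, d\mu(x) = \int_\calX \overline{g(x)}\, d\mu(x),
\end{align*}
where the first equality pulls the bounded linear functional $\langle \cdot, g\rangle$ inside the Bochner integral, and the second uses the reproducing identity $\langle K_x, g\rangle = \overline{\langle g, K_x\rangle} = \overline{g(x)}$. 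I would also record at the outset that $\|g\|_\infty$ is automatically finite, since $|g(x)| = |\langle g, K_x\rangle| \le \|g\| \, \|K_x\| = \|g\|$ by Cauchy--Schwarz and the unit-norm normalization of $K$.

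For the lower bound $\|g\|_A^* \ge \|g\|_\infty$, I would test against the kernel functions themselves. Since $K_x = K * \delta_x$ and $\|\delta_x\|_{TV} = 1$, each atom satisfies $\|K_x\|_A \le 1$, so $K_x$ lies in the feasible set of the supremum defining $\|g\|_A^*$. The identity above with $\mu = \delta_x$ gives $\langle K_x, g\rangle = \overline{g(x)}$, hence $|g(x)| = |\langle K_x, g\rangle| \le \|g\|_A^*$ for every $x \in \calX$. Taking the supremum over $x$ yields $\|g\|_\infty \le \|g\|_A^*$.

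For the upper bound $\|g\|_A^* \le \|g\|_\infty$, I would take an arbitrary $f$ with $\|f\|_A \le 1$ and any admissible representation $f = K * \mu$. Applying the identity and estimating the scalar integral gives
\begin{align*}
	|\langle f, g\rangle| = \left| \int_\calX \overline{g(x)}\, d\mu(x)\right| \le \int_\calX |g(x)|\, d|\mu|(x) \le \|g\|_\infty\, |\mu|(\calX) = \|g\|_\infty\, \|\mu\|_{TV}.
\end{align*}
Taking the infimum over all $\mu$ with $K * \mu = f$ yields $|\langle f, g\rangle| \le \|g\|_\infty \, \|f\|_A \le \|g\|_\infty$, and the supremum over such $f$ gives $\|g\|_A^* \le \|g\|_\infty$. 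Combining the two bounds completes the proof.

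The main obstacle I anticipate is the justification of the interchange of the functional $\langle \cdot, g\rangle$ with the Bochner integral; this is where I would invoke the standard fact that bounded linear functionals commute with Bochner integration, relying on the Bochner integrability of $x \mapsto K_x$ already set up in the preamble (separability, unit-norm kernel, measurability). The remaining steps are routine once that identity is in hand, and the feasibility $\|K_x\|_A \le 1$ needed for the lower bound follows immediately from representing $K_x$ through the Dirac measure $\delta_x$.
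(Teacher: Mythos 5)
Your proposal is correct and follows essentially the same route as the paper: the upper bound via the identity $\langle K*\mu, g\rangle = \int_\calX \overline{g}\,d\mu$ and the estimate $|\int \overline{g}\,d\mu| \le \|g\|_\infty \|\mu\|_{TV}$, and the lower bound by testing against $K_x = K*\delta_x$ with $\|K_x\|_A \le 1$. The only cosmetic difference is that you take the infimum over all representing measures directly (avoiding any appeal to the attainment result of Lemma~\ref{lem:InfAttain}, which the paper invokes) and phrase the lower bound as a supremum over $x$ rather than via near-maximizers with an $\epsilon$; both are immaterial.
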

\begin{proof}
Using Lemma \ref{lem:InfAttain} and the definition of the dual norm, we see that
\begin{align*}
	\norm{g}_A^* = \sup_{\norm{f}_A \leq 1} \abs{\sprod{g,f}} \leq  \sup_{\norm{\mu}_{TV} \leq 1} \abs{\sprod{g, K*\mu}} = \sup_{\norm{\mu}_{TV} \leq 1}
	| \int g(x) d\mu(x) |  = \norm{g}_\infty \, .
\end{align*}
Hence, $\norm{g}_A^* \leq \norm{g}_\infty$. To see the opposite inequality, let $\epsilon>0$ be arbitrary but fixed and $x$ be such that $\abs{g(x)} \geq (1-\epsilon) \norm{g}_\infty$. We then have
\begin{align*}
	(1-\epsilon) \norm{g}_\infty \leq \abs{\sprod{g,K_x}}\leq \norm{g}_A^*\norm{K_x}_A \leq \norm{g}_A^*,
\end{align*}
since $\norm{K_x}_A \leq 1$ due to $K_x = K * \delta_x$. Since $\epsilon>0$ was arbitrary, this proves $\norm{g}_\infty \leq \norm{g}_{A}^*$.

\end{proof}

Let us now discuss the connection to the original atomic norm definition from 
\cite{chandrasekaran2012convex}. As said before, this notion has a more geometric 
nature: If $\Phi$ is a symmetric subset of a real finite dimensional vector space, 
the authors defined the atomic norm $\norm{\cdot }_{A'}$ as
\begin{align} \label{eq:AltAtomic}
			\norm{x}_{A'} = \inf \set{ t>0 \ \vert \ t x \in \clonv(\Phi)},
		\end{align}
where $\clonv(\Phi)$ is the closed convex hull of the set $\Phi$. When $\calX$ 
is a second countable locally compact metric space, and we take $\Phi = (\omega 
\cdot K_x)_{\abs{\omega}=1, x \in \calX}$, then the previous definition presented
in \eqref{eq:AltAtomic} is equivalent to our $TV$-optimization based definition
also in the infinite dimensional setting. We prepare this statement with an 
approximation lemma that concentrates the support of elements in $\calM(\calX)$.

\begin{lemma}	\label{lem:deltaDenseness}
	Let $\calX$ be a second countable locally compact metric space, then the set of finitely supported complex measures
	\begin{align*}
		\Delta = \set{ \sum_{i \in P} c_i \delta_{x_i} \ \vert \ \abs{P}<\infty, x_i \in \calX, c_i \in \C, i \in P}
	\end{align*}
	is weak-$*$-dense in $\calM(\calX)$. More specifically, for each $ \mu \in \calM(\calX)$, there exists a sequence $(\nu_n)_{n \in \mathbb N}$ in $\Delta$
	such that
	$  \nu_n \wstarto \mu$ and for each $n \in \mathbb N$,  $\norm{\nu_n}_{TV} \leq \norm{\mu}_{TV} .$
\end{lemma}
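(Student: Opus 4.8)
The plan is to work throughout via the Riesz--Markov identification $\calM(\calX) = \calC_0(\calX)^*$, so that $\nu_n \wstarto \mu$ means exactly $\int g\, d\nu_n \to \int g\, d\mu$ for every $g \in \calC_0(\calX)$. I would construct the approximants by \emph{lumping} the mass of $\mu$ onto finitely many points drawn from a fine Borel partition of a large compact subset of $\calX$. Rather than build the sequence at once, I would first show that $\mu$ lies in the weak-$*$ closure of $\Delta \cap B_R$, where $B_R = \set{\sigma \in \calM(\calX) : \norm{\sigma}_{TV} \le R}$ and $R = \norm{\mu}_{TV}$, and only then promote this to an actual sequence.

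So fix a basic weak-$*$ neighborhood of $\mu$, determined by finitely many $g_1, \dots, g_k \in \calC_0(\calX)$ and some $\epsilon > 0$. Since $|\mu|$ is a finite regular Borel measure, inner regularity yields a compact set $Q$ with $|\mu|(\calX \setminus Q) < \epsilon$. The $g_j$ are continuous, hence \emph{uniformly} continuous on the compact set $Q$, so there is a common $\delta > 0$ with $|g_j(x) - g_j(y)| < \epsilon$ for every $j$ whenever $x, y \in Q$ and $d(x,y) < \delta$. As compact metric spaces are totally bounded, I may partition $Q$ into finitely many nonempty Borel sets $E_1, \dots, E_m$ of diameter less than $\delta$, choose $x_i \in E_i$, and set $\nu = \sum_i \mu(E_i)\, \delta_{x_i} \in \Delta$. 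The total-variation bound is then essentially free: $\set{E_1, \dots, E_m, \calX \setminus Q}$ is a finite Borel partition of $\calX$, so the definition of $\norm{\mu}_{TV}$ as a supremum over such partitions gives
\[
	\norm{\nu}_{TV} = \sum_i \abs{\mu(E_i)} \le \sum_i \abs{\mu(E_i)} + \abs{\mu(\calX \setminus Q)} \le \norm{\mu}_{TV} = R,
\]
so that $\nu \in B_R$.

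For the test functions I split $\int g_j\, d\mu = \int_Q g_j\, d\mu + \int_{\calX \setminus Q} g_j\, d\mu$; the tail is bounded by $\norm{g_j}_\infty\, \epsilon$, while
\[
	\abs{ \int_Q g_j\, d\mu - \int g_j\, d\nu } = \abs{ \sum_i \int_{E_i} (g_j(x) - g_j(x_i))\, d\mu(x) } \le \epsilon\, |\mu|(Q) \le \epsilon\, R,
\]
where the penultimate step uses uniform continuity on $Q$ together with $\diam(E_i) < \delta$, and the fact that both $x$ and $x_i$ lie in $Q$. Hence $\nu$ sits in the prescribed neighborhood, which shows $\mu$ is in the weak-$*$ closure of $\Delta \cap B_R$.

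The main obstacle is the final passage from ``$\mu$ is a weak-$*$ limit point'' to the existence of a single \emph{sequence}, since the weak-$*$ topology on all of $\calM(\calX)$ is not metrizable. I would resolve this exactly as in Lemma~\ref{lem:InfAttain}: second countability of $\calX$ makes $\calC_0(\calX)$ separable, and on the bounded ball $B_R$ the weak-$*$ topology associated to a separable predual is metrizable. Therefore the weak-$*$ closure of $\Delta \cap B_R$ within $B_R$ agrees with its sequential closure, producing a sequence $(\nu_n)$ in $\Delta \cap B_R$ with $\nu_n \wstarto \mu$ and $\norm{\nu_n}_{TV} \le R = \norm{\mu}_{TV}$, as required. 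Equivalently, one can sidestep metrizability by a diagonal argument over a countable dense subset $\set{g_m}$ of $\calC_0(\calX)$, choosing $\nu_n$ to approximate $\int g_m\, d\mu$ for $m \le n$ within $1/n$; the uniform bound $\norm{\nu_n}_{TV} \le R$ then upgrades convergence on the dense set to convergence against all of $\calC_0(\calX)$.
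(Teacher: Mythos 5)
Your proof is correct, and the engine is the same as the paper's: capture most of $\abs{\mu}$ on a compact set, chop that set into Borel pieces of small diameter, and lump the $\mu$-mass of each piece onto a chosen point, with the total-variation bound coming for free because the pieces together with the complement of the compact set form a partition of $\calX$. The difference lies in how the approximating \emph{sequence} is produced. The paper builds one explicit sequence up front, taking compact sets $C_n$ with $\abs{\mu}(C_n) > \norm{\mu}_{TV} - 1/n$ and partitions of diameter $1/n$, and then verifies convergence against an arbitrary $f \in \calC_0(\calX)$ at the end, using that such an $f$ is uniformly continuous; no metrizability argument is needed. You instead make the partition fineness depend on the finitely many test functions defining a basic weak-$*$ neighborhood, which shows only that $\mu$ lies in the weak-$*$ closure of $\Delta \cap B_R$, and you then correctly flag and repair the gap between closure points and sequential limits via metrizability of the weak-$*$ topology on norm-bounded sets (or the equivalent diagonal argument over a countable dense subset of $\calC_0(\calX)$) — the same separability fact the paper already uses in Lemma~\ref{lem:InfAttain}. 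Your route costs one extra abstract step but is slightly more robust in that it only needs uniform continuity of the $g_j$ on the compact set $Q$ rather than on all of $\calX$; the paper's route is more direct and yields a sequence whose construction is independent of any test function. Both arguments also subsume the first assertion of the lemma (plain weak-$*$ density), which the paper separately dispatches with a pre-annihilator argument.
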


\begin{proof}
The fact that $\Delta$ is dense in the weak-* topology is by the double-perp theorem equivalent to the pre-annihilator of $\Delta$ being the trivial subspace $\{0\}$.
The Dirac measures are included in $\Delta$, so if $f \in \calC_0(\calX)$ is annihilated by each element in $\Delta$ then it vanishes.

For the approximation result,
let $\mu \in \calM(\calX)$ be given. For each $n\in \mathbb N$, choose a compact set $C_n$ such that $|\mu|(C_n)> \|\mu\|_{TV} - 1/n$.
Next, by the definition of the total variation norm, let $\{A^{(n)}_{k}\}_{k=1}^{m(n)}$ be such that
$
   \sum_{k=1}^{m(n)} |\mu(A^{(n)}_{k})| > |\mu|(C_n) - 1/ n \, .
$
Using a countable basis for the topology consisting of open balls, for any $\delta>0$ we refine the partition to obtain
a Borel measurable partition $\{A^{(n,\delta)}_{k}\}_{k=1}^{m(n,\delta)}$ whose sets
have a diameter of at most
$\delta$.

We now specify the sequence by choosing a sequence of diameters, $\delta_n = 1/ n$; we select a point $x^{(n)}_k \in A^{(n,1/n)}_{k}$ for each $k \le m(n,1/n)$, and let
$$
   \nu_n = \sum_{k=1}^{m(n,n^{-1})} \mu(A^{(n,n^{-1})}_{k}) \delta_{x^{(n)}_k} \, .
$$

By definition, $\|\nu_n \|_{TV} \le \|\mu\|_{TV}$. We wish to prove $\nu_n \wstarto \mu$. For this, let $f \in \calC_0(\calX)$. Since $f$ is vanishing at infinity,
$f$ is uniformly continuous, and hence given $\epsilon>0$, $\abs{f(x) - f(x_k^{(n)})} \leq \epsilon$ for all $x \in A_{k}^{(n,1/n)}$ provided $n$ is large enough.

This implies that
		\begin{align*}
			\abs{\int_{\calX} f d\mu - \int_{\calX} f d\nu_n} 
			&\leq \abs{ \int_{C_n} (f - \sum_{k=1}^{m(n,n^{-1})} f(x_{k}^{(n)}) \ind_{A_{k}^{(n,n^{-1})}} ) d\mu } + \norm{f}_\infty \abs{\mu}(\calX \backslash C_n) \\
			&\leq \norm{\mu}_{TV} \epsilon + \norm{f}_\infty /n  \le C \epsilon
		\end{align*}
		for $n$ large enough, where the constant $C$ only depends on $\mu$ and $f$. Hence, $\nu_n \wstarto \mu$.
		\end{proof}
		
%
		
This now allows us to discuss to which extent both definitions are equivalent.	
		
\begin{proposition} \label{prop:normequiv}
Let $\calX$ be a second countable locally compact metric space, $\calH$ be a separable reproducing kernel Hilbert space over $\calX$ with
unit-norm kernel $K$ such that $\calC_0(\calX) \cap \calH$  is dense in $\calH$.
			Let $\Phi = (\omega \cdot K_x)_{\abs{\omega}=1, x \in \calX}$, then $\norm{\cdot}_{A'}$ defined in \eqref{eq:AltAtomic} is equal to $\norm{\cdot}_A$.
		\end{proposition}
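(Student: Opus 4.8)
The plan is to show that the two positively homogeneous functionals $\norm{\cdot}_{A'}$ and $\norm{\cdot}_A$ have the same closed unit ball; since each is the Minkowski gauge of its own closed unit ball (one has $\norm{f}_\bullet = \inf\{t>0: f \in t\,B_\bullet\}$ for both $\bullet\in\{A,A'\}$), equality of the balls forces equality of the norms. By definition the closed unit ball of $\norm{\cdot}_{A'}$ is $\clonv(\Phi)$. For $\norm{\cdot}_A$, Lemma~\ref{lem:InfAttain} guarantees that the defining infimum is attained whenever finite, so its closed unit ball is exactly $B_A := D(B_{TV})$, where $D\mu = K*\mu$ and $B_{TV} = \{\mu\in\calM(\calX):\norm{\mu}_{TV}\le 1\}$. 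Thus everything reduces to proving $\clonv(\Phi)=B_A$.

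First I would transport the geometric side through the linear operator $D$. Because $\omega K_x = D(\omega\delta_x)$, a convex combination $\sum_j\lambda_j\,\omega_j K_{x_j}$ is the image under $D$ of the finitely supported measure $\sum_j\lambda_j\omega_j\delta_{x_j}$, whose total variation is at most $\sum_j\lambda_j = 1$; conversely any $\nu\in\Delta$ with $\norm{\nu}_{TV}\le 1$ arises this way, where to absorb a possible deficit $\norm{\nu}_{TV}<1$ one uses that $0 = \tfrac12 K_x + \tfrac12(-K_x)\in\conv(\Phi)$. This identifies $\conv(\Phi) = D(\Delta\cap B_{TV})$ and hence $\clonv(\Phi) = \overline{D(\Delta\cap B_{TV})}$, the norm closure being taken in $\calH$. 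Along the way I would also record that $\conv(\Phi)$ is balanced (it is circled and contains $0$), which is what makes its gauge a genuine norm.

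The equality $\clonv(\Phi)=B_A$ then splits into two inclusions. For $\clonv(\Phi)\subseteq B_A$ it is enough to know that $B_A = D(B_{TV})$ is norm-closed, since it already contains $D(\Delta\cap B_{TV})$. To see closedness, suppose $D\mu_n\to f$ in $\calH$; the weak-$*$ sequential compactness of $B_{TV}$ (Helly's theorem, exactly as in the proof of Lemma~\ref{lem:InfAttain}) gives a subsequence with $\mu_{n_k}\wstarto\mu\in B_{TV}$, and I claim $D\mu_{n_k}\wto D\mu$, whence $f = D\mu\in B_A$ by uniqueness of weak limits. For the reverse inclusion $B_A\subseteq\clonv(\Phi)$, I would take $\mu\in B_{TV}$ and invoke Lemma~\ref{lem:deltaDenseness} to produce $\nu_n\in\Delta$ with $\nu_n\wstarto\mu$ and $\norm{\nu_n}_{TV}\le\norm{\mu}_{TV}\le 1$; then $D\nu_n\in\conv(\Phi)$ and $D\nu_n\wto D\mu$, so $D\mu$ lies in the weak closure of the convex set $\clonv(\Phi)$, which coincides with its norm closure and is therefore $\clonv(\Phi)$ itself.

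The crux of the argument, used in both inclusions, is the continuity statement $\nu_n\wstarto\mu \implies D\nu_n\wto D\mu$, transferring weak-$*$ convergence in $\calM(\calX)$ to weak convergence in $\calH$. This is where the standing hypotheses are essential: for $g\in\calH\cap\calC_0(\calX)$ one has $\sprod{D\nu_n,g} = \int\conj{g}\,d\nu_n\to\int\conj{g}\,d\mu = \sprod{D\mu,g}$ directly from weak-$*$ convergence, and the uniform bound $\norm{D\nu_n}\le\norm{\nu_n}_{TV}\le 1$ together with the density of $\calH\cap\calC_0(\calX)$ in $\calH$ upgrades this to testing against every $g\in\calH$. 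I expect this passage---pinning down the correct topologies and checking that boundedness plus density suffices---to be the main obstacle, whereas the combinatorial identification of $\conv(\Phi)$ and the reduction to unit balls are routine.
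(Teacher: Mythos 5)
Your proposal is correct and follows essentially the same route as the paper's proof: both directions rest on Helly's weak-$*$ sequential compactness of the $TV$-ball, the weak-$*$-to-weak continuity of $D$ tested against the dense set $\calH\cap\calC_0(\calX)$, Lemma~\ref{lem:deltaDenseness}, and the coincidence of weak and norm closures of convex sets. Your repackaging as an equality of closed unit balls (with the explicit verification that $D(B_{TV})$ is norm-closed) is only a cosmetic reorganization of the same argument.
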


\begin{proof}
Let $f\in \calH$ be arbitrary and $t > \norm{f}_{A'}$. Then there exists a $g\in \clonv \Phi$ such that $f=tg$. By assumption, the function $g$ can be expressed as
a limit $\lim_{n \to \infty} g_n$ with
$$g_n = \sum_{i \in M_n} \theta_i^n \omega_{i}^n K_{x_i^n} = K*\left(\sum_{i \in M_n} \theta_i^n \omega_i^n \delta_{x_i^n}\right),$$ for  sets $\abs{M_n} < \infty$, real, positive, scalars $(\theta_i^n)_{i \in M_n}$ with $\sum_{i \in M_n} \theta_i^n \leq 1$ and unimodular complex numbers $(\omega_i)_{i \in M_n}$.  The corresponding sequence of measures $\mu_n = \sum_{i \in M_n} \theta_i^n \omega_i^n \delta_{x_i^n}$ have norms bounded above by one and therefore, due to sequential weak-$*$ compactness of the closed unit ball in $\calM(\calX)$, there is a weak-$*$-convergent subsequence $\mu_{n'} \wstarto \mu_*$, $\norm{\mu_*}_{TV} \leq 1$. Due to continuity of $D$ as a map from $\calM(\calX)$  to $\cal H$ equipped with the weak topology, $f= tg = \wlim tg_n = \wlim tD \mu_n = tD\mu_*$, which proves that $\norm{f}_{A} \leq \norm{t \mu_*}_{TV} \leq t$. Since $t> \norm{f}_{A'}$ was arbitrary, we conclude $\norm{f}_{A} \leq \norm{f}_{A'}$.
	
	If on the other hand $\norm{f}_A = t$, there exists by Lemma \ref{lem:InfAttain} some $\mu$ with $f = D\mu$ and $\norm{\mu}_{TV}=t$. Lemma \ref{lem:deltaDenseness} shows that there exists a sequence of finitely supported measures $\mu_n = \sum_{i \in M_n} c_i^n \delta_{x_i^n}$ weak-$*$-converging to $\mu$, with $ \norm{\mu_n} \leq \norm{\mu}$ . Continuity of $D$ as above implies that $f = \wlim D \mu_n =  \wlim_{n \to \infty} \sum_{i \in M_n} c_i^n K_{x_i^n}$. By decomposing $c_i^n = t \theta_i^n \omega_i^n$ with $\abs{\omega_i^n} =1$, $\theta_i^n \geq 0$, and $\sum_{i \in M_n}\theta_i^n \leq 1$, we see that $f$ is in the weak
	closure of the convex hull of $t \Phi$, which coincides with the strong closure, so $f \in t\clonv \Phi$. This completes the proof.
\end{proof}

\section{Sparse Recovery by Atomic Norm Minimization} \label{sec:rec}

Having prepared and introduced our general model situation following (D1), we now aim for developing
a theory for sparse recovery based on atomic norm minimization satisfying our desiderata
(D2)--(D4).

\subsection{Sparsity in Reproducing Kernel Hilbert Spaces}

We start by formalizing the notion of sparsity we aim to exploit. Notice that this
definition is indeed in line with (D4). 

\begin{definition}
A function $f \in \mathcal H$ is called {\em $s$-sparse},
if there exist sequences $\{c_n\}_{n=1}^s$ in $\mathbb C^s$ and $\{x_n\}_{n=1}^s$
in $\mathcal X^s$, such that
$$
   f = \sum_{n=1}^s c_n K_{x_n}
$$
and for any other expansion $f = \sum_{n=1}^{s'} c'_n K_{x'_n}$ of $f$ we have
\[
s \le s'.
\]
\end{definition}

It would be useful if any linear combination of $s$ kernel
functions gave an $s$-sparse function. In order to guarantee this,
we need to rule out linear dependencies of the kernel functions. 
This property is present in the function spaces we consider here, but 
proving it can be challenging, as seen in the conjecture by Heil, Ramanathan and Topiwala \cite{HRT}.
The key
notion for ensuring this property is made precise by the following definition. 

\begin{definition}
A Hilbert space with reproducing kernel has the {\em $s$-HRT property},
if for any set of $s$ points $\{x_n\}_{n=1}^s$ in $\mathcal X$
the corresponding kernel functions form a linearly independent
set $\{K_{x_n}\}_{n=1}^s$.
\end{definition}

The next result shows that indeed the $s$-HRT property leads to the required
injectivity when a complex measure with support of size at most $s$ is mapped
the corresponding expansion in terms of kernel functions.

\begin{proposition}\label{prop:uniqueness}
If $\mathcal H$ has the
$2s$-HRT property, then any $f \in \mathcal H$ of the form
$
  f = \sum_{n=1}^s c_n K_{x_n}
$
with $c \in \mathbb C^s$ and $x \in \mathcal X^s$
is $s$-sparse and the choice of $c\in \mathbb C^s$ and $x\in \mathcal X^s$
is uniquely determined by $f$.
\end{proposition}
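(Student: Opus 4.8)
The plan is to establish both assertions at once, by pitting the given expansion against an arbitrary competing one and invoking the $2s$-HRT property on their combined support. First I would record a preliminary observation: the $2s$-HRT property in fact implies that \emph{any} family of at most $2s$ distinct kernel functions is linearly independent. Indeed, a subset of a linearly independent set is linearly independent, and since the spaces of interest contain infinitely many points, any set of at most $2s$ distinct points can be completed to a set of exactly $2s$ distinct points, to which the hypothesis applies directly. I would also flag that the statement must be read under the tacit non-degeneracy assumption that the given expansion is \emph{reduced}, i.e. the $x_n$ are distinct and the $c_n$ are nonzero --- otherwise $f$ would admit a strictly shorter expansion and fail to be $s$-sparse --- and that ``uniquely determined'' means up to a common permutation of the index set $\{1,\dots,s\}$.

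The core step is the comparison. Let $f=\sum_{n=1}^s c_n K_{x_n}$ be the given reduced expansion and let $f=\sum_{m=1}^{s'} c'_m K_{x'_m}$ be an arbitrary reduced expansion with $s'\le s$. Subtracting and collecting terms over the union of the two point sets yields
\[
0 = \sum_{y \in Y} d_y K_y, \qquad Y = \{x_n\}_{n=1}^s \cup \{x'_m\}_{m=1}^{s'},
\]
where $d_y$ is the net coefficient at $y$; because the points within each expansion are distinct, $d_y$ receives at most one contribution from each expansion. Since $|Y| \le s+s' \le 2s$, the preliminary observation shows that $\{K_y\}_{y\in Y}$ is linearly independent, so $d_y = 0$ for every $y \in Y$.

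From $d_y=0$ I would extract both conclusions simultaneously. If a point $x_n$ of the given expansion did not occur in the competing one, its net coefficient would be $c_n \neq 0$, a contradiction; hence $\{x_n\}_{n=1}^s \subseteq \{x'_m\}_{m=1}^{s'}$. As both are sets of distinct points of sizes $s$ and $s'$ with $s'\le s$, this inclusion forces $s=s'$ and $\{x_n\}_{n=1}^s = \{x'_m\}_{m=1}^{s'}$, and then $d_y=0$ at each shared point gives matching coefficients; thus the competing reduced expansion coincides with the given one up to reindexing. In particular there is no reduced expansion of length strictly less than $s$, so (after reducing an arbitrary expansion to one of no greater length) every expansion of $f$ has length at least $s$, proving $f$ is $s$-sparse; and the reduced expansion of minimal length $s$ is unique up to a common permutation of its indices, which is the asserted uniqueness of $(c,x)$.

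I expect the main obstacle to be bookkeeping rather than ideas: one must track carefully how the two supports overlap so that $d_y$ is correctly identified, and observe that the bound $|Y|\le 2s$ is \emph{exactly} what the hypothesis $2s$-HRT (rather than merely $s$-HRT) supplies. The only genuinely delicate point is the preliminary reduction of $2s$-HRT to linear independence of at most $2s$ kernels, which requires $\mathcal X$ to contain at least $2s$ points; this is harmless in all cases of interest, where $\mathcal X$ is infinite.
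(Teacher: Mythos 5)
Your proof is correct and follows essentially the same route as the paper's: equate two competing expansions of $f$ and invoke the $2s$-HRT property on the at most $2s$ kernel functions in their combined support to force all net coefficients to vanish. The paper compresses this into two sentences, while you additionally make explicit the tacit non-degeneracy of the given expansion and the reduction of the $2s$-HRT hypothesis to subsets of fewer than $2s$ points --- both legitimate clarifications rather than new ideas.
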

\begin{proof}
If this were not the case, then there would be two non-identical
linear combinations of at most $s$ terms giving $f$. Equating them
shows that the zero vector can be written as a non-trivial
linear combination of at most $2s$ terms, which contradicts the
$2s$-HRT property.
\end{proof}

The next two examples shall show that the $s$-HRT property is by
far not artificial and in fact occurring in many well-known
situations.

\begin{example} \label{ex:torus}
Let $\mathcal X$ be the torus, identified with $[0,1)$ and the metric distance $d(x,y) = \min\{|x-y|,|x-y+1|,|x-y-1|\}$
between $x, y \in [0,1)$.
For $m \in \mathbb N$, let the Dirichlet kernel be given by $D_m(x) = \frac{1}{2m+1} \sum_{k=-m}^m e^{2\pi i k x}$,
and let $K_y(x) = D_m(x-y)$. Then the space of trigonometric polynomials $\mathcal T_m$ of degree at most $m$, equipped with the
inner product
$$
   \langle p , q \rangle = (2m+1) \int_{0}^1 p(x) \overline{q(x)} dx \,
$$
between $p$ and $q$ in $\mathcal T_m$,
 is a reproducing kernel space.
  By the interpolation property of $\mathcal T_m$, any set of at most $2m+1$ kernel functions
 is linearly independent, hence $\mathcal T_m$ has the
 $(2m+1)$-HRT property.
 \end{example}

 Another example we study further below is a space of entire functions with significance in time-frequency  analysis.
 \begin{example} \label{ex:Bargmann}
 Consider the space $\calF$ containing each entire function $f: \C \to \C$ obtained from a series
 $$
    f(z) = \sum_{n=0}^\infty c_n \frac{z^n}{\sqrt{n!}}
 $$
 with $c \in \ell^2$. The map from $c$ to $f \in \cal F$ becomes an isometry when
the norm of  $f$ is given by an $L^2$-norm with a Gaussian weight,
$$
   \|f \|^2 = \int_{\mathbb C} |f(x+iy)|^2 e^{-(x^2+y^2)} \frac{dxdy}{\pi} \, .
$$
The sequence of monomials $(e_n)_{n=0}^\infty$ with $e_n(z) = z^n / \sqrt{n!}$ is then an orthonormal basis for $\cal F$.
When approximating any entire function $f$ whose weighted $L^2$-norm is finite by $f_m = \sum_{n=m}^\infty \langle f, e_n \rangle e_n$,
its orthogonal projection onto the subspace ${\mathcal F}_m$
of polynomials of degree $m$,
then it can be shown with the mean value property of entire functions that $(f_m)_{m \in \mathbb N}$ converges uniformly on compact sets
in $\mathbb C$, so each such $f$ is in $\calF$. This space is called Bargmann space \cite{Bar67}.

From the resolution of the identity with $(e_n)_{n \in \mathbb N}$ and summability,
$$f(w) = \sum_{n=0}^\infty \langle f, e_n\rangle w^n/\sqrt{n!}$$ which shows that
$
  f(w) = \langle f, K_w \rangle
$
with the reproducing kernel $K_w(z) = \sum_{n=0}^\infty \frac{z^n\overline{w}^n}{n!} = e^{z \overline w}$.
This kernel is not unit norm, but as described we can pass to the space $\widetilde\calF$ with kernel $\widetilde K_w(z) = e^{-(|z|^2+|w|^2)/2}e^{z \overline w}$
whose inner product is given by the Lebesgue measure on $\mathbb C \simeq \mathbb R^2$.
Because of the interpolation property of polynomials, $\calF$ and $\widetilde\calF$ and the $m$-dimensional subspaces
$\calF_m$ and $\widetilde\calF_m$ associated with polynomials of maximal degree $m$ have the $m$-HRT property.
 \end{example}

\subsection{Main Sparse Recovery Result}

In this section, we present our main recovery result. The setting is as follows: We are given 
linear measurements $(\sprod{ f,M_i})_{i\in I}$ of a signal $f \in \calH$, indexed by
an at most countable  set $I$ and corresponding to a family of vectors  $(M_i)_{i \in I}$ in $\calH$,
thereby ensuring satisfaction of (D2). Our standard assumption is that $(M_i)_{i \in I}$ is a Bessel family
with Bessel bound $1$, or equivalently, that the measurement map
$$
  M: \calH \to \ell^2(I), (Mf)_i = \langle f, M_i\rangle
$$
has operator norm $\|M\|=1$. Whenever convenient, we will also replace $\ell^2(I)$ by a general separable Hilbert space $\calH'$ and think of the measurement as a contraction $M: \calH \to \calH'$. Note that the latter interpretation allows for inclusion of even uncountable sampling sets: Observing a function $Mf$ in a reproducible kernel Hilbert space $ \calH'$ on an uncountable domain $\Omega$ amounts to taking uncountably many samples $Mf(x), x\in \Omega)$ of the function $Mf$.

We will assume that
$
	f= \sum_{n=1}^s c_{n} K_{x_n},
$
for some coefficient vector $c \in \C^s$. The first main result of this section is dedicated to finding conditions that ensure
$\norm{c}_1=\norm{f}_A$, that is, the measure $\sum_{n=1}^s c_n \delta_{x_n}$ is a minimizer of the program $(\calP)$. Since it will be very convenient in the sequel, let us introduce a term for such minimizers:
\begin{definition} Let $\calX$ and $\calH$ be as stated in Lemma~\ref{lem:InfAttain},
	and $f \in \calH$ with $\|f\|_A < \infty$. A  minimizer $\mu_*$ of $(\calP)$ is called an \emph{atomic decomposition} of $f$.
\end{definition}
Note that atomic decompositions do not necessarily need to be unique -- but they always exist due to Lemma \ref{lem:InfAttain}. When there is no risk for confusion, we will also very liberally speak of $D\mu_*= \int_{\calX} K_x d\mu_*(x)$  as being an atomic decomposition of $f$.

Inspired by e.g. \cite{chandrasekaran2012convex}, we propose the following program for recovering $f$ from the measurement with $(M_i)_{i \in I}$:
\begin{align}
	\min \norm{g}_A \st \sprod{f,M_i}=\sprod{g,M_i} , i \in I. \tag{$\calP_A$}
\end{align}
We stress that this also following our philosophy for recovering analog signals in the 
sense of (D3).

We now state and prove the first main result on sparse recovery in reproducing kernel Hilbert spaces.
It generalizes results by Cand{\`e}s and Fernandez-Granda \cite{candes2014super} as well as Tang et al.~\cite{tang2013compressed}, based on the use of dual certificates as in \cite{CandesTao2006IEEE}. 

\begin{theorem} \label{th:MainRecovery}
	Let $f \in \calH$ be given by the linear combination 
	\begin{align*}
		f= \sum_{n=1}^s c_n K_{x_n} \,
	\end{align*}
	with $c \in {\mathbb C}^s$ and $x \in \calX^s$.
  Assume that the closed linear span of the measurement vectors $\clsp\set{M_i, i \in I}=\calH'$ contains a continuous function $\psi \in \calH \cap \calC_0(\cal X)$  with the properties
	\begin{itemize}
	\item[(i)] $\norm{\psi}_\infty = 1$,
	\item[(ii)]  $\psi(x_j) = c_j/\abs{c_j}$, $j = 1, \dots, m$,
	\item[(iii)]  $\abs{\psi(x)}<1$ for $x \notin \set{x_j, j=1, \dots m}$,
	\end{itemize}
	 and additionally that $(K_{x_n})_{n=1}^m$ is linearly independent.  Then $f$ is the unique solution of the program
$(\calP_A)$ and $\mu_0 = \sum_{n=1}^s c_n \delta_{x_n}$ is an atomic decomposition of $f$.
\end{theorem}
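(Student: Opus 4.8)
The statement bundles two claims: that $\mu_0=\sum_{n=1}^s c_n\delta_{x_n}$ realizes the atomic norm of $f$, and that $f$ is the \emph{unique} minimizer of $(\calP_A)$. My plan is to build everything around the dual certificate $\psi$, following the classical dual-certificate scheme underlying \cite{candes2014super,tang2013compressed}. First I would settle optimality of $\mu_0$. The bound $\norm{f}_A\le\norm{\mu_0}_{TV}=\norm{c}_1$ is immediate from the definition of $(\calP)$, so only the reverse inequality needs work. Here I pair $f$ with $\psi$: since $f=K*\mu_0$, the identity $\langle K*\mu,h\rangle=\int\overline{h}\,d\mu$ together with property (ii) gives $\langle f,\psi\rangle=\int_\calX\overline{\psi}\,d\mu_0=\sum_{n=1}^s c_n\overline{\psi(x_n)}=\sum_{n=1}^s|c_n|=\norm{c}_1$. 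By Proposition~\ref{prop:dualNorm} and (i), $\norm{\psi}_A^*=\norm{\psi}_\infty=1$, so $\norm{c}_1=|\langle f,\psi\rangle|\le\norm{\psi}_A^*\,\norm{f}_A=\norm{f}_A$. Combining the two bounds yields $\norm{f}_A=\norm{c}_1=\norm{\mu_0}_{TV}$, i.e. $\mu_0$ is an atomic decomposition of $f$.

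For uniqueness I would take an arbitrary minimizer $g$ of $(\calP_A)$ and aim to show $g=f$. Feasibility means $g-f\perp M_i$ for all $i$, hence $g-f\perp\calH'=\clsp\set{M_i}$; since $\psi\in\calH'$ this forces $\langle g,\psi\rangle=\langle f,\psi\rangle=\norm{c}_1$. Lemma~\ref{lem:InfAttain} supplies an atomic decomposition $\mu_g$ with $\norm{\mu_g}_{TV}=\norm{g}_A=\norm{c}_1$. Expressing the pairing through $\mu_g$ and using $|\psi|\le 1$ produces the chain
\begin{align*}
\norm{c}_1=\re\int_\calX\overline{\psi}\,d\mu_g\le\int_\calX|\psi|\,d|\mu_g|\le\norm{\mu_g}_{TV}=\norm{c}_1,
\end{align*}
so every inequality is an equality. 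Equality in the second place forces $|\psi|=1$ almost everywhere with respect to $|\mu_g|$, and property (iii) then confines $\supp\mu_g$ to $\set{x_1,\dots,x_s}$; equality in the first place aligns the phases, so that $\mu_g=\sum_{n=1}^s\rho_n c_n\delta_{x_n}$ with $\rho_n\ge 0$. In particular any minimizer lives in the finite-dimensional space $V=\spn\set{K_{x_n}}_{n=1}^s$.

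It remains to upgrade ``same support'' to ``same measure''. Both $f=\sum_{n}c_nK_{x_n}$ and $g=\sum_{n}\rho_nc_nK_{x_n}$ lie in $V$, and they share all measurements, i.e. $g-f\in V\cap\ker M$; the plan finishes by arguing $g-f=0$, whence $\rho_n=1$ for all $n$ and $g=f$. This last step is where I expect the main difficulty to sit, and it must genuinely use that $M$ is injective on $V$, equivalently $V\cap(\calH')^{\perp}=\set{0}$. Linear independence of $(K_{x_n})_{n=1}^s$ guarantees $\dim V=s$ and is certainly needed, but it cannot do the job alone: pairing against the single function $\psi$ only yields the one scalar relation $\sum_n(\rho_n-1)|c_n|=0$, which is far from enough to suppress an $(s-1)$-parameter family of spurious minimizers, so the measurements themselves must separate the active kernel functions. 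This injectivity on $V$ is the abstract analogue of the full-rank Vandermonde condition that lets the Fourier samples in \cite{candes2014super,tang2013compressed} distinguish point masses, and verifying it under the stated hypotheses is the part I would scrutinize most carefully; the certificate bookkeeping of the first two steps is routine by comparison.
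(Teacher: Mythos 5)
Your proposal follows exactly the dual-certificate route of the paper's own proof, and the first two stages are carried out correctly: the pairing $\langle f,\psi\rangle=\|c\|_1$ together with $\|\psi\|_A^*=\|\psi\|_\infty=1$ (Proposition~\ref{prop:dualNorm}) gives $\|f\|_A=\|c\|_1$, so $\mu_0$ is an atomic decomposition; and the equality chain for an arbitrary feasible minimizer $g$ forces $|\psi|=1$ almost everywhere with respect to $|\mu_g|$ together with phase alignment, so that condition (iii) confines $\supp\mu_g$ to $\{x_1,\dots,x_s\}$ and $\mu_g=\sum_n\rho_n c_n\delta_{x_n}$ with $\rho_n\ge 0$ and $\sum_n\rho_n|c_n|=\|c\|_1$. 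This is the same argument the paper gives; the paper localizes the support by a neighborhood argument rather than by the a.e.\ equality statement, but the content is identical.

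The step you leave open is, however, genuinely open in your write-up: you never conclude $\rho_n=1$, so as it stands the proof is incomplete. Your diagnosis of why it is hard is correct: linear independence of $(K_{x_n})_n$ in $\calH$ only shows that $g$ determines the coefficients $d_n=\rho_n c_n$ uniquely, while feasibility only constrains $MK*\mu_g$, so one needs $\spn\{K_{x_n}\}\cap(\calH')^{\perp}=\{0\}$, i.e.\ linear independence of the \emph{measured} atoms $(MK_{x_n})_n$; the single certificate $\psi$ supplies only the one scalar relation $\sum_n(\rho_n-1)|c_n|=0$. You should be aware that the paper's own proof dispatches precisely this point with the one sentence ``the linear independence property implies that $\mu=\mu_0$,'' which is the leap you declined to make: as stated, the hypothesis is linear independence in $\calH$, not injectivity of $M$ on the span of the active kernels, and the former does not imply the latter when $\calH'$ is a proper subspace. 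In the paper's concrete instances (Dirichlet-kernel measurements with few well-separated points, and $M=\mathrm{id}$ in the Bargmann-space application) the needed injectivity holds for Vandermonde-type reasons, but it is not a formal consequence of the theorem's hypotheses. A clean repair is to strengthen (or reinterpret) the hypothesis as linear independence of $(MK_{x_n})_{n=1}^s$, after which $g-f\in\spn\{K_{x_n}\}\cap\ker M=\{0\}$ finishes the argument; with that addition your proof is complete and coincides with the paper's.
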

\begin{proof}
	Towards a contradiction, assume that there exists a $g \in \calH$ with $\sprod{g,M_i}=\sprod{f,M_i}$ for all $i \in I$ but $\norm{g}_A \leq \norm{f}_A \le \|c\|_1$. Let $\mu$ be an atomic decomposition of $g$. By continuity, $\sprod{f,M_i} = \sprod{g,M_i}$ for all $i$ implies that $\sprod{g,\psi}= \sprod{f,\psi}$. This has the consequence
	\begin{align*}
		0=\sprod{f-g,\psi} &= \sprod{K*(\mu_0-\mu), \psi} 
		= \int_{\cal X} \langle K_x ,\psi\rangle d(\mu_0-\mu)
		= \int_\calX \overline{\psi} d\mu_0 - \int_\calX \overline{\psi} d \mu\\
		& =  \sum_{n=1}^m \abs{c_n} - \int_\calX \overline{\psi} d \mu = \|c\|_1 - \int_{\supp \mu} \overline{\psi} d \mu \, .
	\end{align*}
	Now if there were $x \in \supp \mu$ with $\abs{\psi(x)}<1$, there would exist a neighborhood $U$ of $x$ with $\mu(U)>0$ and $\abs{\psi(y)}<1$ for all $y \in U$. This would have the consequence that $\norm{g}_A  \le \|c\|_1 =  \abs{\int_{\supp \mu} \overline{\psi} d \mu}<\norm{\mu}_{TV}$, which is a contradiction to $\mu$ being an atomic decomposition of $g$. Hence, $\supp{\mu} \sse \set{x_n}_{n=1}^m$, and the linear independence property implies that $\mu = \mu^0$, in particular $f=g$.
\end{proof}

The problem addressed in this work is whether
a signal in a space of trigonometric polynomials that satisfies a sparsity pattern requirement can be recovered from projections onto lower-dimensional subspaces.
The subspaces are chosen to consist of low-frequency content so that the measurement observes a low-pass version of the signal, and the recovery restores the signal to its original level of resolution.

\begin{example}
Consider the space of trigonometric functions presented in Example~\ref{ex:torus}.
We wish to recover a sufficiently sparse signal $f \in {\mathcal F}_N$
from a set of measurements with $\{M_j\}_{j=1}^{2m+1}$ with $M_j (t) = e^{2\pi i jt}$ and $m\le N$.

As shown by \cite[Proposition 2.1]{candes2014super}, if $m\ge 128$,
 and $T=\{t_1, t_2, \dots, t_s\}$ is a sequence in $[0,1)$ such that $\Delta(T)=\min_{i \ne j} d(t_i,t_j) \ge 2/m$, then
for any $w \in \mathbb T^s$, there exists $\psi \in  \mathcal F_m$ such that
$\psi(t_j) = w_j$ for each $j \in \{1, 2, \dots, s\}$ and $|h(t)|<1$ if $t \not \in T$.
Consequently, by the preceding theorem, given any $f \in {\mathcal F}_N$
such that
$$
   f = \sum_{n=1}^s c_n K_{t_n}
$$
and $T=\{t_j\}_{j=1}^s$ satisfies the separation condition for $\Delta(T)$,
then $c \in {\mathbb T}^s$, $t \in [0,1)^s$ and $f$ are uniquely
characterized by the projection of  $f$ onto ${\mathcal F}_m$ and by the identity
$$
   \| f\|_A = \sum_{n=1}^s |c_n| \, .
$$
\end{example}

\subsection{Stability and Robustness}

In this section we study the effect of noisy measurements and the question whether accurate recovery can be achieved even if
the model assumption of sparsity is only approximately satisfied.

Let $M$ be the measurement operator on $\calH$, associated with the Bessel family $(M_i)_{i \in I}$ as described in the preceding section.
 Suppose that we are given \emph{contaminated} measurements $b=Mf + n$, of an element $f \in \calH$ with an \emph{approximately} sparse atomic decomposition
 \begin{align*}
 	f= \sum_{n=1}^s c_n K_{x_n} + K* \mu_c,
\end{align*}
in $(K_x)_{x \in \calX}$.  The complex measure $\mu_c$ is hereby arbitrary in $\calM(\calX)$, and intuitively should be thought of as a small `non-sparse' part of $f$.
We wish to find conditions under which the following problem approximately recovers $f$:
\begin{align} \tag{$\calP_A^\epsilon$}
	\min \norm{g}_A \st \norm{Mg-b} \leq \epsilon.
\end{align}

For $\ell_1$-minimization for recovery of sparse signals $v_0 \in \R^n$, theorems like Theorem~\ref{th:MainRecovery} often generalize to statements about approximate recovery by programs like $(\calP_A^\epsilon)$.  A typical statement would be that the solution $x^*$ of a regularized version  $\ell_1$-minimization
	\begin{align*}
		\min \norm{x}_1 \st \norm{Mx-b}_2 < \epsilon
	\end{align*}
	obeys
	\begin{align*}
		\norm{x^*-x_0}_1 \leq C_1 \cdot \left(\min_{c \text{ $s$-sparse.}} \norm{x_0-c}_1\right)+ C_2 \epsilon,
	\end{align*}
for constants $C_1, C_2 \in \R$ (see, for instance, \cite[6.12]{FoucartRauhut2013}). The direct analogue of that result in our setting would be
obtained by replacing the $\ell_1$-norm with the total variation norm, $x^*$ 
 with $\mu_*$, $x_0$ with $\mu_0$ and $c$ with $\mu$, where the minimization is over $\mu$ whose support is of size $s$.
It is unfortunately not possible to prove a statement like this in our setting. To see this, note that often the map $x \mapsto K_x$ is continuous. (In the two cases considered in Section \ref{sec:appl}, this is the case.) Therefore, $\norm{M K_{x'}- M K_{x}}_2$ is small for $x$ close to $x'$. Hence, for certain contaminations $n$, a multiple of $K_{x'}$ will be a solution of $\calP_A^\epsilon$ -- but $\norm{\delta_{x}-\delta_{x'}}_{TV} =2$ for all $x \neq x'$.

In \cite{DuvalPeyre2015}, the author perform a very precise analysis of the structure of the solution of the \emph{Beurling LASSO},
given by
\begin{align*}
	\min_{\mu \in \calM(\mathbb{T})} \frac{1}{2}\norm{\Phi \mu - b}_2^2 + \lambda \norm{\mu}_{TV},
\end{align*}
for the case of $\Phi: \calM(\mathbb{T}) \to L^2([0,2\pi]) $ being a smooth convolution operator, i.e.,
\begin{align*}
	\Phi \mu (t) = \int_{0}^{2\pi} \chi(t-x) d\mu(x) \quad \mbox{with } \chi \in \calC^2(\mathbb{T}).
\end{align*}
$\mathbb{T}$ is thereby the torus $ \mathbb{T} \simeq \mathbb{R}/(2\pi {\mathbb Z})$.  The authors prove that for small noise levels $\epsilon$ and small values of $\lambda$,  under certain dual certificate conditions partly resembling the ones we have discussed above, the solutions $\mu_*$ of the Beurling LASSO are concentrated on sets close to the support of the ground truth solution $\mu_0$. In the particular case of a train of $s$ $\delta$-spikes as ground truth measure, the minimizer of the Beurling Lasso will also be a train of $s$ $\delta$-spikes, whose positions and amplitudes are close to the ones of $\mu_0$.

To prove this is certainly an achievement, but the special form of the measurement operator is explicitly used several times in the arguments. In particular, some of their conditions involve both the first and second derivative of the dual certificate (the function $\psi$ in Theorem \ref{th:MainRecovery}), which does not need to exist for more general measurement operators.

We will prove a result which is of similar flavor, although by far not as precise, as the ones of \cite{DuvalPeyre2015} for $(\calP_A^\epsilon)$. Intuitively, it states that for ground truth signals having an atomic decomposition with small, finite support, most of the mass of the atomic decompositions of the minimizer of $\calP_A^\epsilon$ will be concentrated in a set close to the support of an atomic measure.

We introduce some notation to make the notion of concentration quantitative.
For a sequence $(x_i)_{i \in \mathbb N}$ in the metric space $\calX$ and $\delta >0$, we define $S_\delta= \cup_{i\in \N} U_\delta(x_i)$ where $U_\delta(x)$ is the open ball of radius
$\delta$ centered at $x$. Given a reproducing kernel Hilbert space $\calH$ and a contraction $M: \calH \to \calH'$ to another Hilbert space $\calH'$, for a sequence $(\omega_j)_{j \in \mathbb N}$ and $\lambda,\delta>0$,
we define the following set of interpolating measurements:
    	\begin{align}
		\Theta_{x,\omega,\lambda,\delta} = \{\nu \in \calH': \,  & M^* \nu \in \calH \cap \calC_0(\calX)  \label{eq:consistency}\\
		& M^* \nu (x_j) =\omega_j, j \in \N \label{eq:anchorPoints}\\
		  & \norm{M^* \nu}_\infty \leq 1, \label{eq:bounds}\\
		 & \abs{M^* \nu(x)}\leq \lambda \text{ for } x \notin  S_\delta \} \, .\label{eq:valleys}
	\end{align}


This allows us to derive a stability result of our sparse recovery approach for
analog signals when those signals are affected by noise.

\begin{theorem} \label{th:Stability}

 Let $\calX$ and $\calH$ be as in Proposition~\ref{prop:normequiv} and $M: \calH \to \calH'$ a contraction. Let $f \in \calH$ have the form
	\begin{align*}
		f = \sum_{i \in \N } c_i K_{x_i} + K*\mu_c
\end{align*}	
and let $f_*$ be a solution of the program,\footnote{Such a solution always exists: the set $\set{\mu \vert \norm{Mf-DM\mu} \leq \epsilon, \norm{\mu}_{TV} \leq 2 \norm{f}_A}$ is convex, bounded and closed, thus weak-$*$-compact.} given by
	\begin{align*}
		\min \norm{g}_A \st \norm{b-Mg} \leq \epsilon, \tag{$\calP_\epsilon$}
	\end{align*}
	with $\norm{b-Mf}\leq \epsilon$. Further, define $\mu_*$ through $f_*=D\mu_*$, $\norm{f_*}_A=\norm{\mu_*}_{TV}$,
and for $\delta, \lambda >0$, set 
	\begin{align*}
		 C(\lambda,\delta) &:= \sup_{\abs{\omega_j}=1} \inf  \{\norm{\nu}: \nu \in \Theta_{x,\omega,\lambda,\delta} \}.
	\end{align*}
	Then we have
	\begin{align} \label{eq:measureConcentration}
		\abs{\mu_*}\left(S_\delta\right) \geq \norm{f}_A -  \frac{2C(\lambda,\delta)\epsilon + \norm{\mu_c}_{TV}}{1-\lambda} \, .
	\end{align}

\end{theorem}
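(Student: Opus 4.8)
The plan is to run a relaxed dual-certificate argument, closely parallel to the proof of Theorem~\ref{th:MainRecovery}, but with the strict separation $\abs{\psi(x)}<1$ replaced by the quantitative valley condition $\abs{\psi(x)}\le\lambda$ encoded in $\Theta_{x,\omega,\lambda,\delta}$. First I would fix the phases $\omega_j=c_j/\abs{c_j}$ and unwind the definition of $C(\lambda,\delta)$: since it is a supremum over unimodular phase sequences of an infimum of $\norm{\nu}$ over $\Theta_{x,\omega,\lambda,\delta}$, for every $\eta>0$ there is a $\nu\in\calH'$ with $\nu\in\Theta_{x,\omega,\lambda,\delta}$ and $\norm{\nu}\le C(\lambda,\delta)+\eta$. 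Setting $\psi=M^*\nu\in\calH\cap\calC_0(\calX)$, by construction $\norm{\psi}_\infty\le 1$, $\psi(x_j)=c_j/\abs{c_j}$ for all $j$, and $\abs{\psi(x)}\le\lambda$ whenever $x\notin S_\delta$. This $\psi$ is the relaxed certificate, and the whole estimate comes from testing both $f_*$ and $f$ against it.

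Next I would estimate the scalar $\sprod{f_*,\psi}=\sprod{Mf_*,\nu}$ from two sides. For the upper bound, write $\sprod{f_*,\psi}=\int_\calX\overline{\psi}\,d\mu_*$ using $f_*=D\mu_*$ and the identity $\sprod{K*\mu,\psi}=\int\overline{\psi}\,d\mu$ (valid for any representing measure, not just the atomic one), then split the integral over $S_\delta$ and its complement and use $\abs{\psi}\le 1$ on $S_\delta$, $\abs{\psi}\le\lambda$ off $S_\delta$ to get $\abs{\sprod{f_*,\psi}}\le(1-\lambda)\abs{\mu_*}(S_\delta)+\lambda\norm{\mu_*}_{TV}$. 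For the lower bound I would pass through $f$: both $f$ and $f_*$ are feasible for $(\calP_\epsilon)$ with $\norm{b-Mf}\le\epsilon$, so $\norm{Mf-Mf_*}\le 2\epsilon$ and hence $\re\sprod{f_*,\psi}\ge\re\sprod{f,\psi}-2\epsilon\norm{\nu}$. Evaluating $\sprod{f,\psi}$ on the representing measure $\mu_0+\mu_c$ with $\mu_0=\sum_i c_i\delta_{x_i}$, the interpolation $\psi(x_i)=c_i/\abs{c_i}$ collapses the sparse part to $\sum_i\abs{c_i}$, while $\abs{\int\overline{\psi}\,d\mu_c}\le\norm{\mu_c}_{TV}$ controls the perturbation, giving $\re\sprod{f,\psi}\ge\sum_i\abs{c_i}-\norm{\mu_c}_{TV}$.

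Combining the two sides via $\re\sprod{f_*,\psi}\le\abs{\sprod{f_*,\psi}}$ yields $(1-\lambda)\abs{\mu_*}(S_\delta)+\lambda\norm{\mu_*}_{TV}\ge\sum_i\abs{c_i}-\norm{\mu_c}_{TV}-2\epsilon\norm{\nu}$. The final step eliminates $\norm{\mu_*}_{TV}$ and converts $\sum_i\abs{c_i}$ into $\norm{f}_A$. Here I would use optimality, namely that $f$ is feasible for $(\calP_\epsilon)$, so $\norm{\mu_*}_{TV}=\norm{f_*}_A\le\norm{f}_A$, together with the representation bound $\norm{f}_A\le\sum_i\abs{c_i}+\norm{\mu_c}_{TV}$. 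Substituting these, rearranging to isolate $\abs{\mu_*}(S_\delta)$, and letting $\eta\to0$ so that $\norm{\nu}$ may be replaced by $C(\lambda,\delta)$, produces an inequality of the stated form $\abs{\mu_*}(S_\delta)\ge\norm{f}_A-\frac{2C(\lambda,\delta)\epsilon+\norm{\mu_c}_{TV}}{1-\lambda}$.

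The mechanism (test against a relaxed certificate, split on $S_\delta$, absorb the off-support mass through $\lambda$) is routine, so the difficulties are bookkeeping rather than conceptual. First, $C(\lambda,\delta)$ hides a supremum over phase sequences and an infimum over $\Theta_{x,\omega,\lambda,\delta}$ that need not be attained, so one must run the whole argument with near-minimizers and defer the limit $\eta\to0$ to the very end; the bound is of course only meaningful when $C(\lambda,\delta)<\infty$, i.e.\ when such a relaxed certificate exists. Second, and this is where I expect the real friction, absorbing the $\lambda\norm{\mu_*}_{TV}$ term forces one to trade $\sum_i\abs{c_i}$ against $\norm{f}_A$, and a naive triangle-inequality accounting lets $\mu_c$ enter twice, producing $2\norm{\mu_c}_{TV}$ rather than the single factor in the stated bound; recovering the sharper perturbation constant requires treating the two appearances of $\mu_c$ jointly, and this is the step I would verify most carefully.
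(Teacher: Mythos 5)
Your argument is essentially the paper's own proof: the same relaxed certificate $\psi=M^*\nu$ with $\omega_j=c_j/\abs{c_j}$, the same two-sided estimate of $\sprod{f_*,\psi}=\int\overline\psi\,d\mu_*$ via the split over $S_\delta$ and its complement, the same use of feasibility ($\norm{Mf-Mf_*}\le 2\epsilon$) and optimality ($\norm{\mu_*}_{TV}\le\norm{f}_A$), and the same passage to near-minimizers of $\inf\norm{\nu}$ at the end. The one point where you hedge --- the factor $2$ in front of $\norm{\mu_c}_{TV}$ --- is not a defect of your reasoning, and you should not expect to remove it by ``treating the two appearances jointly'': the term $\int\overline\psi\,d\mu_c$ can be as negative as $-\norm{\mu_c}_{TV}$ at the same time as $\norm{f}_A$ falls short of $\norm{c}_1+\norm{\mu_c}_{TV}$ by nothing, so the two contributions genuinely add. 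The paper obtains the single factor only through the identity $\sprod{K*\mu_0,M^*\nu}=\sprod{Mf,\nu}$, which silently discards the term $\sprod{MK*\mu_c,\nu}$; since $K*\mu_0=f-K*\mu_c$, carrying that term along contributes another $\abs{\int\overline\psi\,d\mu_c}\le\norm{\mu_c}_{TV}$, and the corrected bound in both your derivation and the paper's is $\abs{\mu_*}(S_\delta)\ge\norm{f}_A-(2C(\lambda,\delta)\epsilon+2\norm{\mu_c}_{TV})/(1-\lambda)$. So your proof is sound and proves the theorem with the constant $2$ on $\norm{\mu_c}_{TV}$; the sharper constant as printed does not follow from either argument, though the difference is immaterial to how the result is used later (e.g.\ in Theorem~\ref{th:DeltaMinimize}, where only the order of the error in $\epsilon$ and $\norm{\mu_c}_{TV}$ matters).
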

\begin{proof}
We first note that $f$ is included in the set over which the atomic norm is minimized, so the minimizer $\mu_*$ satisfies
$
     \|\mu_* \|_{TV} = \|f_*\|_A \le \|f\|_A.
$
Furthermore, combining this with the triangle inequality gives
$$
     \|\mu_* \|_{TV} \le \|f\|_A \le \|c\|_1 + \|\mu_c\|_{TV} \, .
$$
	Denote $\mu_0 = \sum_{i \in \N} c_i \delta_{x_i}$ and pick $\omega_j= \frac{c_j}{\abs{c_j}}$. If there exists no $\nu$ fulfilling the constraint, there is nothing to prove. So let $\nu$ fulfill \eqref{eq:consistency}-\eqref{eq:valleys} and $\psi = M^* \nu$. Due to \eqref{eq:anchorPoints} and the triangle inequality, $\norm{f}_A \le \sprod{K*\mu_0,\psi} + \norm{\mu_c}_{TV}$. Also, $\sprod{K*\mu_0, \psi} = \sprod{K*\mu_0,  M^* \nu} 
	= \sprod{Mf,\nu}$. This implies
	\begin{align*}
		\norm{f}_A \le \sprod{K*\mu_0,\psi }+ \norm{\mu_c}_{TV} &= |\sprod{Mf-Mf_*,\nu}| + |\sprod{K*\mu_*, \psi}| +  \norm{\mu_c}_{TV} \\
		&\leq 2\norm{\nu} \epsilon  + \left|\int_{\calX} \overline{\psi(x)} d\mu_*(x)\right| +  \norm{\mu_c}_{TV},
	\end{align*}
	where the last inequality follows from the constraint of $\calP_\epsilon$, as well as the fact that $\psi \in \calH \cap \calC_0(\calX)$. Now let us take a closer look at the integral. Splitting the domain of integration
	according to
	$$
	   \int_{\calX} \overline\psi d\mu_*= \int_{S_\delta} \overline\psi d\mu_* + \int_{\calX \backslash S_\delta} \overline\psi d\mu_*
	$$
	and estimating $\psi$ by \eqref{eq:bounds} and \eqref{eq:valleys} gives
	\begin{align*}
		\left| \int_{\calX} \overline \psi d\mu_* \right| &\leq \abs{\mu_*}(S_\delta) + \lambda \abs{\mu_*}(\calX \backslash S_\delta) \\
		&\leq   \lambda \norm{f_*}_A + (1-\lambda) \abs{\mu_*}(S_\delta).
	\end{align*}
	Next, we estimate $\norm{f_*}_A \leq \norm{f}_A$. Inserted in the preceding estimate for $\|f\|_A$, this yields
	\begin{align*}
		 \norm{f}_A \leq  2\norm{\nu} \epsilon + \lambda \norm{f}_A + (1-\lambda) \abs{\mu_*}(S_\delta)+ \norm{\mu_c}_{TV} \, .
	\end{align*} Rearranging, we arrive at
	\begin{align*}
		\abs{\mu_*}(S_\delta) \geq \norm{f}_A - \frac{2\norm{\nu} \epsilon + \norm{\mu_c}_{TV}}{1-\lambda}
	\end{align*}
	The definition of $C(\lambda, \delta)$ yields the claim.
\end{proof}

Looking at the proof again, we see that it actually proves that if $\mu$ is a measure satisfying $\norm{\mu}_{TV} \leq \norm{f}_A$ and $\norm{b-MD\mu} \leq \epsilon$,
the existence of the dual certificate $\psi$ implies the estimate  \eqref{eq:measureConcentration}. This fact allows us to prove the following ``interpolation result'' (which is arguably more practically relevant than the above.)
%

\begin{theorem} Let $\calX$, $K$ and $\calH$ be as in Proposition~\ref{prop:normequiv}. \label{th:DeltaMinimize}
	Assume that the map $\calX \to \calH$, $x \mapsto K_x$ is continuous.
	Let
	$$
	   f = \sum_{i=1}^s c_i K_{x_i} + K * \mu_c
	$$
	 and let $(\mu^n) \sse \Delta$ ($\Delta$ still denotes the set of finitely supported measures) be a minimizing sequence of the program
	\begin{align}
		\inf_{\mu \in \Delta} \norm{\mu}_{TV} \st \norm{b-M K * \mu} \leq \epsilon, \tag{$\calP_\Delta$}
	\end{align}
	with $\norm{b-Mf}\leq \epsilon$. Then  there exists a $\delta_0 >0$ so that, for all $\delta\leq\delta_0$ satisfying the same assumptions as in
	Theorem~\ref{th:Stability} and for all sufficiently large $n$, the following error bound holds:
	\begin{align*}
		\norm{D\mu^n -f} &\leq 2 C(\lambda, \delta)\epsilon + (C(\lambda,\delta)+1)  \left( \epsilon \norm{f}_A + \norm{\mu_c}_{TV} + \frac{2 C(\lambda,\delta)\epsilon + \norm{\mu_c}_{TV}}{1-\lambda} \right) \\
		&= C_1 \epsilon + C_2 \norm{\mu_c}_{TV} \, .
	\end{align*}
\end{theorem}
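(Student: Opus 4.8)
The plan is to convert the concentration estimate of Theorem~\ref{th:Stability} into a genuine $\calH$-norm bound by lumping the almost-concentrated mass of $\mu^n$ onto the points $x_1,\dots,x_s$ and then controlling the finite coefficient error through the whole family of dual certificates encoded in $C(\lambda,\delta)$. First I would record the two facts that drive everything. An atomic decomposition $\mu_f$ of $f$ (Lemma~\ref{lem:InfAttain}) is feasible for the unconstrained problem, since $\norm{b-MD\mu_f}=\norm{b-Mf}\le\epsilon$; approximating $\mu_f$ by finitely supported measures as in Lemma~\ref{lem:deltaDenseness} and using the hypothesis that $x\mapsto K_x$ is continuous (the one place it is essential: it upgrades the weak-$*$ approximation to strong convergence $D\nu_k\to f$, hence $\norm{b-MD\nu_k}\to\norm{b-Mf}\le\epsilon$) shows $\inf(\calP_\Delta)\le\norm f_A$. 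Thus the minimizing sequence satisfies $\norm{\mu^n}_{TV}\to\inf(\calP_\Delta)\le\norm f_A$, i.e. $\norm{\mu^n}_{TV}\le\norm f_A+o(1)$. Feeding this into the argument of Theorem~\ref{th:Stability} (isolated in the remark after it) applied to $\mu^n$ with phases $c_j/\abs{c_j}$ gives the concentration \eqref{eq:measureConcentration}: with $\gamma:=\tfrac{2C(\lambda,\delta)\epsilon+\norm{\mu_c}_{TV}}{1-\lambda}$ one obtains $\abs{\mu^n}(S_\delta)\ge\norm f_A-\gamma-o(1)$, and therefore $\abs{\mu^n}(\calX\setminus S_\delta)\le\gamma+o(1)$.

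Next I fix $\delta_0$ so small that the balls $U_\delta(x_j)$, $j=1,\dots,s$, are pairwise disjoint and the modulus of continuity $\omega$ of $x\mapsto K_x$ on a compact neighbourhood of $\{x_1,\dots,x_s\}$ obeys $\omega(\delta_0)\le\epsilon$; for $\delta\le\delta_0$ I set $a^n_j=\mu^n(U_\delta(x_j))$ and $r^n=\sum_{j=1}^s(a^n_j-c_j)K_{x_j}$. Writing $f=D\mu_0+K*\mu_c$ with $\mu_0=\sum_j c_j\delta_{x_j}$ and splitting $\mu^n$ over the balls and their complement yields the identity $D\mu^n-f=r^n+e^n+D(\mu^n|_{\calX\setminus S_\delta})-K*\mu_c$, where $e^n=\sum_j\int_{U_\delta(x_j)}(K_x-K_{x_j})\,d\mu^n$ is the lumping error. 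Since the kernel is unit-norm and $x\mapsto K_x$ is uniformly continuous there, $\norm{e^n}\le\omega(\delta)\abs{\mu^n}(S_\delta)\le\epsilon\norm f_A+o(1)$; the far part obeys $\norm{D(\mu^n|_{\calX\setminus S_\delta})}\le\abs{\mu^n}(\calX\setminus S_\delta)\le\gamma+o(1)$; and $\norm{K*\mu_c}\le\norm{\mu_c}_{TV}$. Collecting these, $\norm{D\mu^n-f}\le\norm{r^n}+B+o(1)$ with $B=\epsilon\norm f_A+\norm{\mu_c}_{TV}+\gamma$.

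The crux, and the step I expect to be the main obstacle, is bounding the finite coefficient vector $r^n$: the total-variation norm is useless here (nearby Diracs are at $TV$-distance $2$), so the measurements must be used. Applying $M$ to the identity above and using feasibility $\norm{M(D\mu^n-f)}\le\norm{b-MD\mu^n}+\norm{b-Mf}\le2\epsilon$ together with the three estimates just obtained gives $\norm{Mr^n}\le2\epsilon+B+o(1)$. To turn this into a bound on $r^n$ itself I exploit that $C(\lambda,\delta)$ is a supremum over \emph{all} unimodular phase patterns: choosing $\omega^n_j=\sgn(a^n_j-c_j)$ and picking $\nu^n\in\Theta_{x,\omega^n,\lambda,\delta}$ with $\norm{\nu^n}\le C(\lambda,\delta)$ (up to arbitrarily small slack), the certificate $\psi^n=M^*\nu^n$ interpolates $\psi^n(x_j)=\omega^n_j$, so by the reproducing property $\sprod{r^n,\psi^n}=\sum_j\overline{\omega^n_j}(a^n_j-c_j)=\norm{a^n-c}_1$, while $\norm{a^n-c}_1=\abs{\sprod{Mr^n,\nu^n}}\le C(\lambda,\delta)\norm{Mr^n}$. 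Because the kernel is unit-norm, $\norm{r^n}\le\sum_j\abs{a^n_j-c_j}=\norm{a^n-c}_1\le C(\lambda,\delta)(2\epsilon+B)+o(1)$.

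Substituting this into $\norm{D\mu^n-f}\le\norm{r^n}+B+o(1)$ gives $\norm{D\mu^n-f}\le 2C(\lambda,\delta)\epsilon+(C(\lambda,\delta)+1)B+o(1)$, which is exactly the asserted estimate once $B$ is expanded. Every term marked $o(1)$ originates solely from $\norm{\mu^n}_{TV}-\inf(\calP_\Delta)\to0$ and hence becomes negligible for sufficiently large $n$, yielding the bound. The only point demanding genuine care is the boundary case $\norm{b-Mf}=\epsilon$ in establishing $\inf(\calP_\Delta)\le\norm f_A$, where the approximating finitely supported measures may marginally violate the constraint; this is repaired by an arbitrarily small relaxation of the feasible radius, again absorbed into the $o(1)$.
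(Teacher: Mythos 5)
Your proposal is correct and follows essentially the same route as the paper: show the minimizing sequence eventually satisfies $\norm{\mu^n}_{TV}\le\norm{f}_A$ (up to $o(1)$), invoke the concentration estimate from Theorem~\ref{th:Stability}, lump the mass in $S_\delta$ onto the spikes $x_j$ using continuity of $x\mapsto K_x$, and control the resulting coefficient error via a dual certificate with phases adapted to $a^n_j-c_j$. The only differences are cosmetic: explicit $o(1)$ bookkeeping, and establishing feasibility of the approximating finitely supported measures via strong convergence of $D\nu_k$ rather than the paper's $\eta$-shrinking trick.
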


\begin{proof}
	We begin by proving that the optimal value of $(\calP_\Delta)$ is smaller than $\norm{f}_A$: Due to Lemma \ref{lem:deltaDenseness}, there exists a sequence of measures $\upsilon^n_c$ with  $\upsilon^n_c \wstarto \mu_c$ and $\norm{\upsilon^n_c}_{TV} \leq \norm{\mu_c}_{TV}$. The  continuity of $D$ and $M$ then implies that
	 for $\eta \in (0,1)$ sufficiently close to  one,
\begin{align}
	\eta \sum_{k =1}^s c_k \delta_{x_k} + \upsilon_c^n \label{eq:upsilon}
\end{align}
will eventually be feasible for $(\calP_\Delta)$. Hence, by $\eta<1$ the infimum of that program is strictly smaller than $\norm{f}_A$. Since $(\mu^n)$ is a minimizing sequence, $\mu^n$ will therefore eventually satisfy $\norm{\mu^n}_{TV} \leq \norm{f}_A$ and $\norm{b- MD\mu} \leq \epsilon$. Theorem \ref{th:Stability} together with the remark directly prior to the current theorem implies that
	\begin{align*}
		\abs{\mu^\circ}(S_\delta) \geq \norm{f}_A - \frac{2C(\lambda,\delta) \epsilon + \norm{\mu_c}_{TV}}{1-\lambda}=: \norm{f}_A - \epsilon',
	\end{align*}
	where $\mu^\circ$ denotes $\mu^n$ with $n$ large enough. This implies
	\begin{align}
	\abs{\mu^\circ}(\calX \backslash S_\delta) = \norm{\mu^\circ}_{TV} - \abs{\mu^\circ}(S_\delta) \leq \norm{f}_A - ( \norm{f}_A - \epsilon') = \epsilon' \label{eq:ComplementBound}.
	\end{align}
	Next, we define a sequence of complex measures $\overline{\mu}^\circ$ of support $\{x_1, x_2, \dots, x_s\}$ through
	\begin{align*}
		\overline{\mu}^\circ = \sum_{k=1}^s \mu^\circ(U_\delta(x_k)) \delta_{x_k}.
	\end{align*}
	Assuming that $\delta$ is sufficiently small so that  $\{U_\delta(x_k)\}_{k=1}^s$ is a sequence of disjoint sets, we then have
	\begin{align*}
		\normm{\int_{S_\delta} K_x d \mu^\circ(x) - \int_{S_\delta} K_x d\overline{\mu}^\circ(x)} \leq \sum_{i=1}^s \int_{U_\delta(x_i)}\norm{K_{x_i} - K_x} d\mu^\circ  \,
	\end{align*}
	with $S_\delta = \cup_{i=1}^s U_\delta(x_k)$.
	Since $ x \mapsto K_x$ is continuous, there exists a $\delta_0$ such that for all $\delta \leq \delta_0$, $\norm{K_{x_i} - K_x} \leq \epsilon$ for all $i$ and $x \in U_\delta(x_i)$. This, and the fact that $\norm{\nu^n_c}_{TV} \leq \norm{\mu_c}_{TV}$ for all $n$, implies that the above expression can be estimated by
	\begin{align}
		\sum_{i=1}^s \int_{U_\delta(x_i)}\norm{K_{x_i} - K_x} d\mu^\circ \leq \epsilon \mu^\circ(S_\delta)
		.
		\label{eq:MuSqueeze}
	\end{align}
	Inequalities \eqref{eq:ComplementBound} and \eqref{eq:MuSqueeze} together imply
	\begin{align} \label{eq:concentrate}
		\norm{K*(\mu^\circ - \overline{\mu}^\circ)} &\leq \normm{\int_{S_\delta} K_x d \mu^\circ(x) - \int_{S_\delta} K_x d\overline{\mu}^\circ(x)} + \normm{\int_{\calX \backslash S_\delta} K_x d \mu^\circ(x)}  \nonumber \\
		&\leq \epsilon \norm{f}_A + \abs{\mu^\circ}(\calX \backslash S_\delta) \leq \epsilon \norm{f}_A  + \epsilon' \, .
\end{align}

	Next,  let $\psi$ be a function obeying the constraints \eqref{eq:consistency} - \eqref{eq:valleys} with  $\omega_j\in \C$, $\abs{\omega_j}=1$ such that $\overline{\omega_j}
	(\mu^\circ( U_\delta(x_j)) - c_j) = \abs{ \mu^\circ (U_\delta(x_j))-c_j}$.
	We recall that for $\mu^\circ$ feasible,
	$$
	   \|Mf-MK*\mu^\circ\| \le 2 \epsilon
	$$
	so by $Mf = M K*(\mu_s + \mu_c)$, this choice of $\psi$ gives
	\begin{align}
		\norm{\overline{\mu}^\circ-\mu_s}_{TV} &=\sum_{i=1}^s \abs{\mu^\circ(U_\delta(x_j)) - c_j} = \sprod{K*(\overline{\mu}^\circ - \mu_s),\psi}
		= \sprod{MK*(\overline{\mu}^\circ - \mu_s),\nu} \nonumber \\
		&\leq \norm{\nu} (\norm{M K*(\overline{\mu}^\circ - \mu^\circ)}  + \norm{M K*\mu^\circ - M(f-K*\mu_c)}) \nonumber \\
		&\leq \norm{\nu} (\norm{K*(\overline{\mu}^\circ - \mu^\circ)}  + \norm{M K*\mu^\circ - Mf} + \|\mu_c\|_{TV}) \nonumber \\
		&\leq \norm{\nu} (\epsilon \|f\|_A + \epsilon' + 2 \epsilon + \|\mu_c\|_{TV})
		.\label{eq:OnSupport}
	\end{align}
	This finally implies, together with estimate \eqref{eq:concentrate},
	\begin{align*}
		\norm{f - K*{\mu}^\circ} &\leq \norm{f - K*\mu_s} + \norm{K*\mu_s - K*\overline{\mu}^\circ} + \norm{K*(\overline{\mu}^\circ - \mu^\circ)} \\
		&  \leq \norm{\mu_c}_{TV} + \|\overline{\mu}^\circ - \mu_s\|_{TV} + \norm{K*(\overline{\mu}^\circ - \mu^\circ)}\\
		& \leq \norm{\mu_c}_{TV}  + \norm{\nu} (\epsilon \|f\|_A + \epsilon' + 2 \epsilon + \|\mu_c\|) + \epsilon \norm{f}_A + \epsilon' \\
		&\leq 2 \norm{\nu} \epsilon + (\norm{\nu} + 1) (\epsilon \norm{f}_A +\norm{\mu_c}_{TV} + \epsilon' ) .
	\end{align*}
	The definitions of $\epsilon'$ and $C(\lambda, \delta)$ give the claim.
\end{proof}

\section{Applications} \label{sec:appl}

We will now discuss two applications of our methodology, namely sparse recovery in the
situations of bandlimited functions -- as stated in the introduction the ``classical''
example for analog signals -- and of functions with sparse expansions in the short-time Fourier transforms.

\subsection{Sparse recovery for bandlimited functions}

\begin{definition}
The space of bandlimited functions on $\mathbb R$ with bandlimit $1/2$ is also known as the Paley-Wiener space $PW_{1/2}$. It is the space consisting of square integrable functions $f$ whose Fourier transforms $\hat f$ have essential support in $[-1/2, 1/2]$. In particular, they are
obtained from  their Fourier transform by
$$
   f(t) = \int_{-1/2}^{1/2} \hat f(\omega) e^{2\pi i \omega t} d\omega \, .
$$

The norm on $PW_{1/2}$ is the $L^2$-norm.
\end{definition}

From the inclusion $L^2([-1/2,1/2)) \subset L^1([-1/2,1/2))$, it can be inferred that each $f \in PW_{1/2}$ is a continuous function. 
The space $PW_{1/2}$ is indeed a reproducing kernel space on $\calX = \mathbb R$, with reproducing kernel
$$
   K_t(s)  = \int_{-1/2}^{1/2} e^{2\pi i \omega (t-s)} d\omega = \frac{\sin(\pi(t-s))}{\pi(t-s)} \quad \mbox{for  } s \neq t,\; s, t \in \mathbb R,
$$
and $K_t(t)=1$ for $t \in \mathbb R$.

Now assume that a function $f \in PW_{1/2}$ is given through $f = \sum_{j=1}^s c_j K_{t_j} + K*\mu_c$, with $t_j \in [-L/2,L/2]$ and $\mu_c$ a complex Borel measure of bounded total variation.  Intuitively, $f$ is a linear combination of a finitely supported complex measure plus a noise part $\mu_c$, run through a low pass filter; see also Figure \ref{fig:GroundTruth}.
We wish to recover $f$ approximately, assuming the set of points $\{t_j\}$ is sufficiently separated. We note that if $\{t_j\} \subset [-L/2,L/2)$, then the Fourier transform of $f$ satisfies
$$
   \hat f (\omega) =  \sum_{j=1}^s c_j e^{2\pi i t_j \omega} + \hat \mu_c(\omega)
$$
on $\omega \in (-1/2,1/2)$.

Motivated by a result concerning recovery of trigonometric polynomials \cite{candes2014super}, it seems natural to use sampling in the frequency domain as measurements.
However, point evaluations of the Fourier transform are not bounded linear functionals on $PW_{1/2}$. We will circumvent this fact by  instead using ``mollified'' Fourier measurements: Assuming that $(2m+1)/L < 1$, then integrating the Fourier transform~$\hat{f}$ with
a moving window
$$
   \langle f, M_k \rangle = \left( \frac{L}{2\rho} \right)^{1/2} \int_{(k-\rho)/L}^{(k+\rho)/L} \hat f(\omega)d\omega 
$$
for $k\in\{-m, -m+1, \dots, m-1, m\}$, gives perturbed measurements of the Fourier transform of $\mu_0 = \sum_{j=1}^s c_j \delta_{t_j}$
according to
$$
  \langle f, M_k \rangle = \hat f * \hat h_\rho (k/L)
$$
with $h_\rho (x) = \left( \frac{L}{2\rho} \right)^{1/2} \sin(\pi \rho x/L)/(\pi x )$, $\rho \le 1/2$.
We note that with this choice $\{M_k\}_{k=-m}^m$ is an orthonormal system.

\begin{figure} \label{fig:GroundTruth}
\center
 \includegraphics[width=8cm]{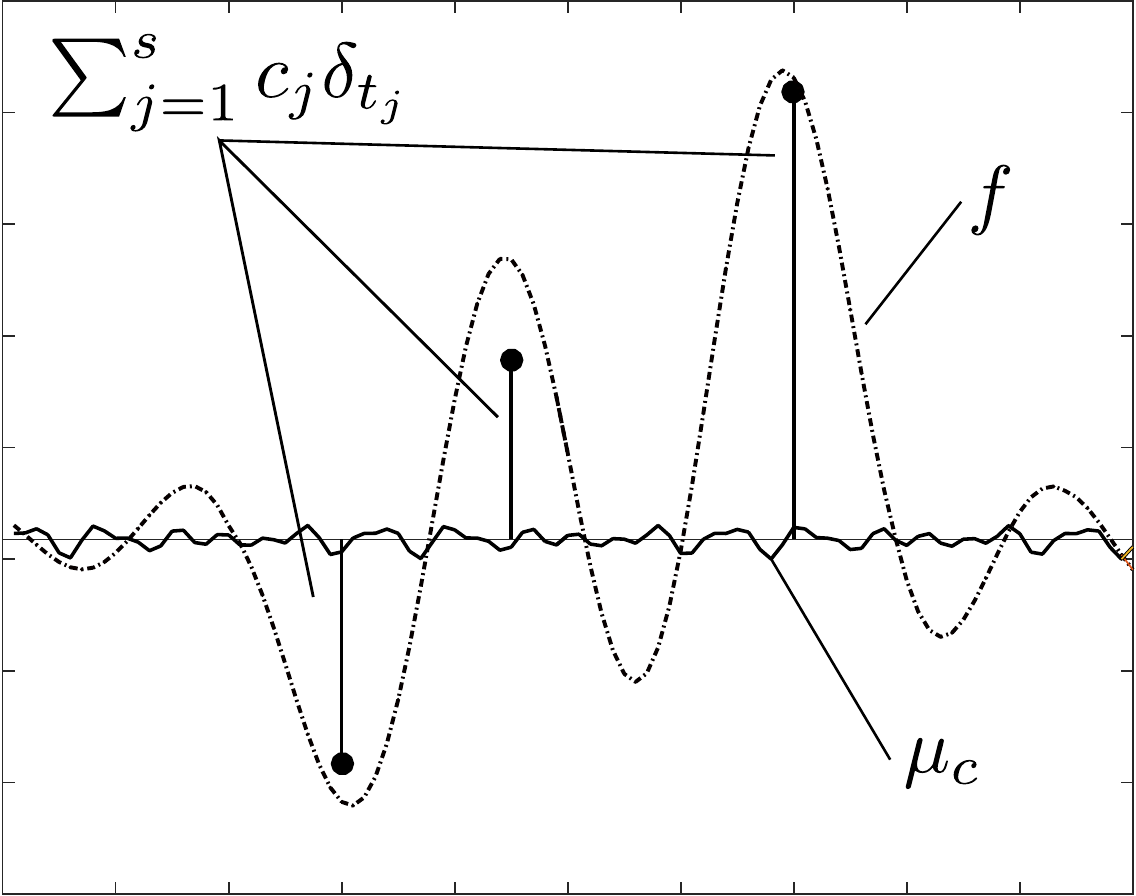}
	\caption{The function $f \in PW_{1/2}$ (dashed) is a low-pass filtered version of the sum of the finitely supported measure $\sum_{i=1}^s c_i \delta_{t_i}$ and a non-sparse part $\mu_c$. }
\end{figure}

It is convenient to re-encode these measured values in a trigonometric polynomial according to
\begin{align*}
 p(y) & = \sum_{k=-m}^m \langle f,M_k \rangle   e^{2\pi i k y}\\
 & =  \sum_{k=-m}^m \hat f * \hat h_\rho (k/L) e^{2\pi i k y} \, \\
%
%
& = \int_{\mathbb R} \sum_{k=-m}^m\exp(2 \pi i k(y-x/L)) f(x) h_\rho(x) dx \\
&= (2m+1) \int_{\mathbb R} D_m(y-x/L) h_\rho(x) d(\mu_0 + \mu_c)(x)
\end{align*}
where we used the notation from Example $1$. The last identity follows from the fact that
$x \mapsto D_m(y-x/L) h_\rho(x)$ is in $PW_{1/2}\cap \calC_0(\mathbb R)$. Hence, $M$  maps functions in $PW_{1/2}$ to the space $\mathcal{T}\C_{\leq m}[x]$ of trigonometric polynomials  on $[-1/2,1/2]$ of degree at most $m$.

In the following, we will investigate to which extent Theorem \ref{th:Stability} is applicable in this context. 
The result of the application of Theorems  \ref{th:Stability} and also \ref{th:DeltaMinimize} to this setting
will be summarized in Theorem \ref{th:Bandlimited}.

 Let us start by noting that the range of the operator $M^*$  is the space $ \{x \mapsto q(x /L) h_\rho(x), q$  a trigonometric polynomial (periodically extended on $\R$) of degree at most $m\}$.
This can be confirmed from the form of $p$ and from the fact that the translates of the Dirichlet kernel $D_m$
span the space of trigonometric polynomials of degree $m$. This fact will enable us to prove the following lemma.

\begin{lemma} \label{lem:AltCert}
	If $\Delta(T) \geq \frac{5}{m}$ and $\rho>0$ is small enough, there exists a trigonometric polynomial $q^\rho$ such that $g = q^\rho h_\rho$ satisfies
	\begin{align}
		g(x_k) &= 1, x_k \in T, \nonumber \\
		\abs{g(x)} &\leq 1- 0.34 \, m^2(x-x_k)^2 \text{ if } \abs{x-x_k} \leq 0.16749/m, \label{eq:nearBound} \\
		\abs{g(x)} &\leq 1- 0.34 \cdot 0.16749^2 \text{ else.} \label{eq:farBound}
	\end{align}
\end{lemma}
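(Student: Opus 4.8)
The plan is to read this lemma as the explicit construction of the dual certificate $\psi = M^{*}\nu$ demanded by Theorem~\ref{th:Stability}. Since every element of $\ran M^{*}$ has the form $q(x/L)h_\rho(x)$, a function $g=q^\rho h_\rho$ obeying \eqref{eq:nearBound}--\eqref{eq:farBound} is exactly a member of the interpolation set $\Theta_{x,\omega,\lambda,\delta}$ for $\omega_j=1$, $\delta = 0.16749/m$ and $\lambda = 1-0.34\cdot 0.16749^2$; note the two estimates \eqref{eq:nearBound} and \eqref{eq:farBound} are arranged to agree at $\abs{x-x_k}=0.16749/m$. I would therefore import the dual polynomial of Cand\`es and Fernandez-Granda \cite{candes2014super} and carry the mollifier $h_\rho$ through their estimates, using that $h_\rho$ is nearly constant on the resolution scale near the nodes while decaying like $O(1/\abs{x})$ at infinity. (The numerical constants $0.34$ and $0.16749$ are their explicit values, traced through the rescaling below and through the mollification.)

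First I would rescale by $y=x/L$, sending $T$ into the torus and turning the measurements into a degree-$m$ trigonometric problem; the hypothesis $\Delta(T)\ge 5/m$ is chosen so as to clear the Cand\`es--Fernandez-Granda separation threshold with room to spare. I would then build $g$ from their ansatz, a linear combination of a Fej\'er-type kernel of degree at most $m$ and its derivative, centered at the nodes and multiplied by $h_\rho$, with coefficients fixed by the constraints $g(x_k)=1$ and $g'(x_k)=0$. Imposing $g'(x_k)=0$ forces each node to be a genuine local maximum of $\abs{g}$, which is what produces the quadratic bound \eqref{eq:nearBound}. Writing $g=(h_\rho(0)\,q^\rho)\,\tilde h_\rho$ with the normalized mollifier $\tilde h_\rho := h_\rho/h_\rho(0)$, the resulting linear system is the Cand\`es--Fernandez-Granda system perturbed by the contributions of $\tilde h_\rho$ and $\tilde h_\rho'$; since $\tilde h_\rho(x)=\sin(\pi\rho x/L)/(\pi\rho x/L)=1+O(\rho^2)$ and $\tilde h_\rho'(x)=O(\rho^2)$ uniformly for $x\in[-L/2,L/2]$, this perturbation is $O(\rho^2)$, so for $\rho$ small the system stays invertible and its solution converges, in $C^2$ on compact sets, to the unperturbed dual polynomial $g_0$.

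Transferring the bounds would then be a continuity argument. The certificate $g_0$ satisfies strict forms of \eqref{eq:nearBound} and \eqref{eq:farBound}, with a near-node curvature constant strictly above $0.34$ and a valley gap strictly above $0.34\cdot 0.16749^2$ outside the $(0.16749/m)$-neighborhoods of $T$; because $g\to g_0$ in $C^2$ on the compact set carrying the nodes and these estimates for $g_0$ are strict, for $\rho$ small enough $g$ inherits bounds of the stated form with the stated constants on $[-L/2,L/2]$. Outside $[-L/2,L/2]$ the trigonometric factor stays bounded while $h_\rho(x)=O(1/\abs{x})$ decays, so $\abs{g}$ falls below the valley level there; this simultaneously delivers $g\in\calC_0(\R)$, and since the Fourier transform of $g$ is supported in $[-(2m+\rho)/(2L),(2m+\rho)/(2L)]\subset[-1/2,1/2]$ whenever $(2m+1)/L<1$ and $\rho\le 1/2$, also $g\in PW_{1/2}$. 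The same linear construction with arbitrary unimodular boundary values would give the general certificate needed for the supremum defining $C(\lambda,\delta)$, the bounds being insensitive to the phases.

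The hard part will be the quadratic near-node estimate \eqref{eq:nearBound}. Knowing only $\abs{g}\le 1$ and $g(x_k)=1$ does not suffice; what is needed is the correct curvature of $\abs{g}^2$ at its maxima, that is, quantitative control of $g''(x_k)$, and this second-order information must survive both the $h_\rho$-perturbation and the fact that Cand\`es--Fernandez-Granda's sharp constants are relaxed only by the fixed margins encoded in $0.34$ and $0.16749$. Making ``$\rho$ small enough'' precise therefore reduces to a $C^2$-stability bound for the perturbed interpolation problem, together with a verification that the margin between the sharp constants and the stated ones is strictly positive --- this is where the explicit numerics are unavoidable. A secondary obstacle is keeping the interpolation exact, $g(x_k)=1$ rather than $1+O(\rho^2)$, which is exactly why the coefficients must be re-solved rather than obtained by rescaling the unperturbed polynomial.
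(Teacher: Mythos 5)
Your proposal follows essentially the same route as the paper: the Cand\`es--Fernandez-Granda ansatz in $\kappa$ and $\kappa'$ with interpolation conditions adjusted for the mollifier $h_\rho$, invertibility of the same interpolation matrix, $\calC^2$-convergence of the (normalized) $q^\rho h_\rho$ to the unperturbed certificate $\tilde q$ as $\rho\to 0$, and transfer of the near-node and valley bounds by strictness of the limiting estimates. The only cosmetic difference is that you perturb the matrix while the paper keeps the CFG matrix fixed and perturbs the right-hand side, which are equivalent; your identification of the $\calC^2$ (second-derivative) control as the crux matches the paper's argument.
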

\begin{proof}
Considering the form of the range of the measurement operator $M$, it is necessary to construct a trigonometric polynomial $q$ with the properties
\begin{align} \label{eq:newCert}
	q(x_k) &= \omega_k h_\rho(x_k)^{-1}, \\
	q'(x_k) &= - \frac{h'_\rho(x_k)}{h_\rho(x_k)^2},
\end{align}
then the function $q \cdot h$ namely obeys $q(x_k)h(x_k)=\omega_k$, $\partial_x (q(x_k)h(x_k)) = 0$. To do this, we will use an ansatz of the form
\begin{align*}
	q= \sum_{j=1}^s \alpha_j \kappa(\cdot-x_j) + \beta_j \kappa'(\cdot-x_j),
\end{align*}
where $\kappa(x) = \left(\frac{\sin(\pi(m/2+1)x))}{\sin(\pi x)}\right)^4$, just as in the paper \cite{candes2014super}. The interpolation problem then reduces to the following system of linear equations:
\begin{align*}
	\sum_{j=1}^s \alpha_j \kappa(x_k-x_j) + \beta_k \kappa'(x_k-x_j) &= \omega_k h_\rho(x_k)^{-1}, \\
	\sum_{j=1}^s \alpha_j \kappa'(x_k-x_j) + \beta_k \kappa''(x_k-x_j) &= - \frac{h'_\rho(x_k)}{h_\rho(x_k)^2}.
\end{align*}
The proof of Lemma 2.2 in \cite{candes2014super} shows that the matrix
\begin{align*}
	\kappa = \begin{bmatrix} [\kappa(x_k-x_j)]_{k,j=1}^s &  [\kappa'(x_k-x_j)]_{k,j=1}^s \\
	 [\kappa'(x_k-x_j)]_{k,j=1}^s &  [\kappa''(x_k-x_j)]_{k,j=1}^s
	\end{bmatrix}
\end{align*}
is invertible if the minimum separation $\Delta(T)$ obeys $\Delta(T) \geq \tfrac{2}{m}$. Under this assumption, it hence exists for each $\rho >0$ a trigonometric polynomial $q^\rho$ satisfying \eqref{eq:newCert}.

By choosing $\omega_k$ instead of $\omega_k h_\rho(x_k)^{-1}$, and $0$ instead of $ - \frac{h'_\rho(x_k)}{h_\rho(x_k)^2}$, respectively, one obtains a different trigonometric polynomial. Let us call this polynomial $\tilde q$. Assuming $\Delta(T) \geq \frac{5}{m}$, then for $\tilde q$ and for each $x_k \in T$,
\cite[Lemma 2.5]{candes2014super} implies
\begin{align*}
	 \abs{\tilde{q}(x)} \leq 1 - .3354 m^2 (x-x_k)^2 \text{ for } \abs{x-x_k}< 0.16749/m.
\end{align*}
If $\abs{x-x_k} \geq 0.16749/m$ for all $k$, we further have 	$ \abs{\tilde{q}(x)} < 1 - 0.3354 m^2 \cdot 0.16749^2$.

If we prove that $q^\rho h_\rho \to \tilde{q}$ in $\calC^2([-\pi,\pi])$ for $\rho \to 0$, then the inequalities \eqref{eq:nearBound}--\eqref{eq:farBound} will eventually be induced by the corresponding ones for $\tilde{q}$. This is clear for \eqref{eq:farBound}, and as for \eqref{eq:nearBound}, a Taylor expansion around $x_k$ implies
\begin{align*}
\abs{\tilde{q}(x)- q^\rho h_\rho(x)} \leq &\abs{\tilde{q}(x_k)  + \tilde{q}'(x_k) (x-x_k) - q^\rho(x_k) h_\rho(x_k) - \partial_x(q^\rho h_\rho)(x_k)} \\
& + \sup_{y \in [x_k,x]} \abs{ \tilde{q}''(y) - \partial_x(q^\rho h_\rho)(y)} \frac{(x-x_k)^2}{2}  \\
&\leq \norm{\tilde{q}-\partial_x(q^\rho h^\rho)}_{\calC^2}  \frac{(x-x_k)^2}{2}
\end{align*}

First, we have $h_\rho(x) = \sqrt{\rho/2L} \sinc(\rho x/L)$. Consequently, $h_\rho'(x) = \sqrt{\rho/2L} \cdot \rho/L \pi \sinc'(\rho x/L)$ and $h_\rho''(x) = \sqrt{\rho/2L} \cdot \rho^2/L^2 \pi^2 \sinc''(\rho x/L)$. Therefore,
\begin{align*}
	\sqrt{2L/\rho} h_\rho &\to 1 \\
	\sqrt{2L/\rho} \cdot L/\rho h_\rho'(x)& \to 0 \\
	\sqrt{2L/\rho} \cdot L^2/\rho^2 h_\rho''(x) &\to  \frac{\pi^2}{6},
\end{align*}
where the convergence is uniform on any compact interval.  This actually proves that $\sqrt{2L/\rho} h_\rho \to 1$ in $\calC^2([-R,R])$ -- the extra $\rho$'s in front prevents the second derivative to converge to zero -- removing them makes it do that.


Now we notice that the trigonometric polynomial $q_\Delta = (\rho/2L)^{1/2} q^\rho - \tilde{q}$ solves the interpolation problem
\begin{align*}
	q_\Delta(x_k) &= \omega_k ( (\rho/2L)^{1/2}h_\rho^{-1}(x_k) - 1) \\
	q_\Delta(x_k) &= - \frac{\rho^{1/2} h'_\rho(x_k)}{(2L)^{1/2} h_\rho(x_k)^2}.
\end{align*}
Due to construction, $q_\Delta$ is linearly dependent on the finitely many values $w=(q_\Delta(x_k), q'_\Delta(x_k))$. Hence, if we prove that $w \to 0 $ as $\rho \to 0$, $q_\Delta \to 0$. The latter convergence is furthermore true in any norm on $\mathcal{T}\C_{\leq m}[x]$, in particular the $\calC^2[-1,1]$-norm, since all norms on  finite dimensional spaces  are equivalent. But  $((\rho/2L)^{1/2}h_\rho^{-1}(x_k) - 1)\to 0$ is a consequence of $(2L/\rho)^{1/2}h_\rho \to 1$, and
\begin{align*}
	\lim_{\rho \to 0} \sqrt{\frac{\rho}{2L}} \frac{h_\rho'(x_k)}{h_\rho^2(x_k)} = \lim_{\rho \to 0}\sqrt{\frac{\rho}{2L}} \frac{\sqrt{\rho/2L}\pi\rho/L \sinc'(\rho x_k/L)}{\rho/2L \cdot \sinc^2(\rho x_k/L)} = \lim_{\rho \to 0} \frac{\pi \rho}{L}\frac{ \sinc'(\rho x_k/L)}{ \sinc^2(\rho x_k/L)} =0.
\end{align*}
We conclude that $(\rho/2L)^{1/2} q^\rho \to \tilde{q}$ and  $(2L/\rho)^{1/2} h_\rho \to 1$ in $\calC^2([-1,1])$. Consequently, $q^\rho h_\rho \to \tilde{q}$ in the same sense, and the claim has been proven.
%
%
%
%
%

\end{proof}



We now assume that we observe a noisy version of $Mf$, i.e. $Mf +\upsilon$, $\norm{\upsilon}_2\leq \epsilon$ (this is in particular the case if we assume that $\upsilon(y) = \sum_{k=-m}^m \lambda_k \exp(ik\pi y)$ with $\norm{\lambda}^2 \leq  \epsilon$, i.e. that we build $Mf$ via noisy Fourier samples $\sprod{f, \exp(i k \pi(\cdot))} + \lambda_k$. Using the previous lemma, we can derive the following result. 

\begin{theorem} \label{th:Bandlimited}
 	Assume that $\Delta(T)\geq \frac{5}{m}$. Then there exists a $\rho_0>0$ so that for every $\rho\leq \rho_0$,
the following is true for the solution $\mu_*$ of $\calP_{\calA}$
\begin{align} \label{eq:BandLimitStab}
\abs{\mu_*}\left(S_{\min(\delta, .16749/m)}\right) \geq \norm{f}_A -  \frac{2\sqrt{\frac{2L}{\rho \sinc(\rho/2L)^2}} \epsilon + \norm{\mu_c}_{TV}}{1-.34 \cdot \min(\delta, .16749)^2},
\end{align}
where $S_\delta$ is as in Theorem \ref{th:Stability}.
Furthermore, a minimizing sequence $(f_n)$ for the $TV$-minimization problem of the form $\sum_{t \in \Delta_n} c_t K_t$ will eventually satisfy
\begin{align*}
	\norm{f_n -f} \leq & \sqrt{\frac{L}{2\rho \sinc(\rho/2L)^2}} \epsilon \\
	&+ \left(\sqrt{\frac{2L}{\rho \sinc(\rho/2L)^2}}+1\right) \left(\norm{f}_A \epsilon + \norm{\mu_c}_{TV} + \frac{\sqrt{\frac{2L}{\rho \sinc(\rho/2L)^2}}\epsilon + \norm{\mu_c}_{TV}}{1-\min(\delta, .16749)^2} \right).
\end{align*} 	

\end{theorem}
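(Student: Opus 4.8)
The plan is to recognize Theorem~\ref{th:Bandlimited} as a direct specialization of the two abstract stability results, Theorems~\ref{th:Stability} and~\ref{th:DeltaMinimize}, to the measurement operator $M$ built from the mollified Fourier samples $M_k$. All that is genuinely new here is (a) producing, for every admissible sign pattern, a valid dual certificate $\psi = M^*\nu$ lying in the set $\Theta_{x,\omega,\lambda,\delta}$ of \eqref{eq:consistency}--\eqref{eq:valleys}, and (b) turning the abstract constant $C(\lambda,\delta)$ into the explicit quantity $\sqrt{2L/(\rho\,\sinc(\rho/2L)^2)}$. Since $x\mapsto K_x$ is continuous for $PW_{1/2}$ (the kernel is the $\sinc$ function), the hypotheses of both abstract theorems are met, so it remains to supply the certificate and estimate its norm.

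First I would fix unimodular numbers $\omega_j$ and feed them into Lemma~\ref{lem:AltCert}: for $\Delta(T)\ge 5/m$ and $\rho$ below the threshold $\rho_0$ furnished by that lemma, it produces $g = q^\rho h_\rho$ with $g(x_j)=\omega_j$ and the pointwise envelope \eqref{eq:nearBound}--\eqref{eq:farBound}. Because the range of $M^*$ is exactly $\{x\mapsto q(x/L)h_\rho(x)\}$, this $g$ equals $M^*\nu$ for a unique $\nu\in\calH'$, which verifies the consistency requirement \eqref{eq:consistency} (note $g\in PW_{1/2}\cap\calC_0(\R)$ since $h_\rho$ decays like $1/x$). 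Condition \eqref{eq:anchorPoints} holds by construction, and \eqref{eq:bounds} is immediate from $|g|\le 1$. For the valley condition \eqref{eq:valleys} I would read off from \eqref{eq:nearBound}--\eqref{eq:farBound} that on the complement of $S_{\delta'}$, with $\delta' := \min(\delta,0.16749/m)$, one has $|g(x)|\le 1 - 0.34\,\min(m\delta,0.16749)^2 =: \lambda$; combining the quadratic near-bound with the flat far-bound is exactly what produces the neighborhood radius $\min(\delta,0.16749/m)$ and the level appearing in the denominator of \eqref{eq:BandLimitStab}.

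The crux, and the step I expect to be the main obstacle, is the norm estimate $C(\lambda,\delta)\le \sqrt{2L/(\rho\,\sinc(\rho/2L)^2)}$. Here I would exploit that $\{M_k\}$ is an orthonormal system, so $M^*$ is an isometry and $\norm{\nu} = \norm{M^*\nu}_{PW} = \norm{g}_{PW}$; passing to the Fourier side, $\hat g$ is piecewise constant on the disjoint frequency boxes of $\hat M_k$, whence $\norm{g}_{PW}^2$ reduces to the periodic $L^2$-norm $\norm{q^\rho}_{L^2}^2 = \sum_j|a_j|^2$ of the trigonometric polynomial $q^\rho$. The remaining task is to bound this by the window value at the relevant node, i.e.\ to show $\norm{q^\rho}_{L^2}\le 1/h_\rho(1/2)$, using the pointwise ceiling $|g|\le 1$ together with $|q^\rho(x_k/L)| = 1/|h_\rho(x_k)|$ at the nodes; the factor $\sinc(\rho/2L)$ is precisely $h_\rho(1/2)/\sqrt{\rho/2L}$, the worst-case window value over the admissible range of nodes. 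This bookkeeping with the window normalization, while keeping the $\sinc$ factor sharp, is the delicate part, and the $\calC^2$-convergence $q^\rho h_\rho\to\tilde q$ established inside Lemma~\ref{lem:AltCert} guarantees that the estimate degrades controllably as $\rho\to0$.

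With the certificate in hand and $C(\lambda,\delta)$ bounded, the conclusion is mechanical: substituting $\lambda$ and $C(\lambda,\delta) = \sqrt{2L/(\rho\,\sinc(\rho/2L)^2)}$ into Theorem~\ref{th:Stability} gives the concentration estimate \eqref{eq:BandLimitStab} for the solution $\mu_*$ of the noisy program, and substituting the same constants into Theorem~\ref{th:DeltaMinimize}, whose hypothesis that $x\mapsto K_x$ is continuous is met, yields the stated error bound for the finitely supported minimizing sequence $(f_n)$. The only care needed is that the feasibility argument of Theorem~\ref{th:DeltaMinimize} applies verbatim, so that $\norm{\mu^n}_{TV}\le\norm{f}_A$ eventually and the $2\epsilon$ budget coming from $\norm{b-Mf}\le\epsilon$ is respected.
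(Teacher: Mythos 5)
Your proposal is correct and follows essentially the same route as the paper: invoke Lemma~\ref{lem:AltCert} to obtain the certificate $g=q^\rho(\cdot/L)h_\rho$, identify the $\nu$-function in Theorem~\ref{th:Stability} with the trigonometric polynomial $q^\rho$ via the explicit form of $M^*$, bound $\norm{\nu}_2=\norm{q^\rho}_2$ by $h_\rho(1/2)^{-1}=\sqrt{2L/(\rho\sinc(\rho/2L)^2)}$ using the pointwise ceiling $|q^\rho h_\rho|\le 1$ and the worst-case window value on the unit interval, and then read off $\lambda$ from the near/far envelope bounds before substituting into Theorems~\ref{th:Stability} and~\ref{th:DeltaMinimize}. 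The only cosmetic difference is that you route the norm identity through the orthonormality of $\{M_k\}$ and the disjoint frequency boxes rather than the paper's direct formula $M^*\nu(x)=h_\rho(x)\nu(x/L)$, which is the same computation.
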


	\begin{proof}
	By construction, $\psi= q^\rho(\cdot /L)h_\rho$, where $q$ is the trigonometric polynomial constructed in Lemma \ref{lem:AltCert} can be used as $\psi$ in the stability theorem. Since for $\nu \in \mathcal{T}\C_{\leq m}[x]$
	\begin{align*}
		M^* f(x)= h_\rho(x) \int_{-1/2}^{1/2} D_m(y - x/L) \nu(y) dy = h_\rho(x) \nu(x/L),
	\end{align*}
we see that the $\nu$-function in the stability theorem can be identified with  $q^\rho$.
	
	Since $\norm{q^\rho h_\rho}_\infty \leq 1$, we can conclude	
	\begin{align*}
		\norm{\nu}_2^2= \norm{q^\rho}_2^2 \leq \int_{-1/2}^{1/2} h_\rho^{-2}(x) dx = \frac{2L}{\rho} \int_{-1/2}^{1/2} \sinc(\rho x/L)^{-2} dx \leq \frac{2L}{\rho \sinc(\rho/2L)^2}
	\end{align*}
	
		Lemma \ref{lem:AltCert} (in particular the bounds \eqref{eq:nearBound} and \eqref{eq:farBound}) now implies that the constant $C(\lambda, \delta)$ in Theorem \ref{th:Stability} satisfies
 	\begin{align*}
		C( 0.34 \cdot 0.16749^2, L \cdot 0.16749/m) \leq \sqrt{\frac{2L}{ \rho\sinc(\rho/2L)^2}} ,&  \\
		C( 0.34 \cdot m^2 \delta^2, L \cdot 0.16749/m) \leq \sqrt{\frac{2L}{\rho \sinc(\rho/2L)^2}},& \quad \delta \leq 0.3298/m.
 	\end{align*}
		
	 (Note that Lemma \ref{lem:AltCert} is to be applied for the points $x_k/L$!) Applying Theorems \ref{th:Stability} and \ref{th:DeltaMinimize} now immediately yields the statements of the theorem.\end{proof}

\begin{remark}Note that the constant in front of $\epsilon$ in \eqref{eq:BandLimitStab} tends to infinity as $\rho \to 0$. On the other hand, $2/(1-0.34\cdot 0.16749^2 ) \leq 1.00963$, so the constant in front of $\norm{\mu_C}_{TV}$ is quite moderate.

\end{remark}

\subsection{Gaussian Window Radar Measurements.}


Next, we consider another model for sparsity, which is motivated by an application to Doppler radar.
Very rudimentarily explained, a radar device functions as follows: A waveform $h$, say in $L^2(\R)$, is sent out from the radar antenna. This waveform is reflected by different objects, and is then measured by a receiving antenna as it returns. The transmission and reflection cause the signal to lose power and shift phase, and since the reflection happens at a distance from the device, it returns with a delay. In addition, if the reflecting object is moving, the received signal is shifted in frequency. Assuming that the reflection takes place at distinct point-like objects, the signal $r$ acquired by the receiving antenna is a linear combination of time-frequency shifted
copies of the signal $h$,
\begin{align*}
 	r(t)= \sum_{i=k}^s r_k h(t-\tau_k) e^{2 \pi i\omega_k t}.
\end{align*}
$r_k \in \R$ are reflection coefficients modelling the loss of power, $\tau_k \in \R$ are the delay and $\omega_k$ the frequency shifts.

This problem was treated in \cite{heckel2016super}. By assuming that $h$ is a trigonometric polynomial, the authors were able to rewrite the problem as a super resolution task from 2D Fourier measurements. This strategy allow the authors to apply techniques from \cite{candes2014super,tang2013compressed} to come up with a recovery guarantee.

  Here, we will instead assume that the \emph{original waveform is a  Gaussian}, i.e. $h(t) = e^{-\Lambda t^2}$, $\Lambda>0$, and hence fully leave the Fourier regime. This filter is certainly not non-realistic,  but it should be noted that its exact form will be essential for our arguments. The received signal can then be rewritten as follows:
\begin{align*}
	r(t) = \sum_{k=1}^s r_k e^{-\Lambda(t-\tau_k)^2} e^{2 \pi i\omega_k t}
	\end{align*}
The problem at hand is now to reconstruct the parameters $r_k$, $\tau_k$ and $\omega_k$ from the knowledge of the function $r$. In the following, we will show that we can map $r$ onto  a signal $\tilde{r}$ which is a short linear combination of the kernel functions, indexed by $(\tau_k,w_k)$, of the so called \emph{Bargmann space}. This will allow us to apply the framework we have developed to recover $(\tau_k,w_k)$ by atomic norm minimization. Towards this goal, let us begin to describe the route from the radar measurements $r$ to the signal in the Bargmann space $\tilde{r}$.

%

\begin{definition}
Let $\{T_a\}_{a\in \mathbb R}$ be the family of translation operators,
acting on $f\in L^2(\mathbb R)$ by $T_af(x)=f(x-a)$  for Lebesgue-almost every $x \in \mathbb R$.
The \emph{short-time Fourier transform} of a function
$f \in L^2(\mathbb R)$ with a window $g\in L^2(\mathbb R)$ is given by the function $V_gf$,
defined on the time-frequency plane $\mathbb R^2$. The transform $V_g f$ has
 the value
$$
  V_g f (t,\omega) = (fT_t \overline g){\hat{~}} (\omega)= \int_{\mathbb R} \overline{g(s-t)} e^{- 2 \pi  i \omega s} f(s) ds\,,  \qquad (t,\omega) \in \mathbb R^2.
$$
\end{definition}

Henceforth, we exclusively use $L^2$-normalized Gaussian windows, $g_\Lambda(s)= (\frac{2}{\pi})^{1/4} \Lambda^{1/2} e^{-\Lambda^2 s^2}$, and replace the
subscript $g_\Lambda$ with the parameter $\Lambda>0$,
$$
    V_\Lambda f(t,\omega) = (\frac{2}{\pi})^{1/4} \Lambda^{1/2} \int_{\mathbb R} e^{-\Lambda^2 (s-t)^2 - 2 \pi i \omega s} f(s) ds\, , \qquad (t,\omega) \in \mathbb R^2 .
$$
Using the isometry of the Fourier transform together with the normalization assumption on $g$, one easily proves that $V_\Lambda$ is an isometry and consequently maps $L^2(\mathbb R)$ to a closed subspace of $L^2({\mathbb R}^2)$. This space is a reproducing kernel Hilbert space, as the following lemma shows.
%
%
%

\begin{lemma} \label{lem:RangeVLambda}
 The range of $V_\Lambda$ is a reproducing kernel Hilbert space with a unit-norm reproducing kernel given by
$$
  K_{t,\omega}(t',\omega') = e^{-\Lambda^2(t-t')^2/2  -\pi^2(\omega-\omega')^2/2 \Lambda^2} e^{i \pi (t+t')(\omega-\omega')} \, .
$$
\end{lemma}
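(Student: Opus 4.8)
The plan is to exploit the fact, already established in the text, that $V_\Lambda$ is an isometry from $L^2(\mathbb R)$ into $L^2(\mathbb R^2)$. Since an isometry has closed range, the range $\mathcal R := V_\Lambda(L^2(\mathbb R))$ is a closed subspace of $L^2(\mathbb R^2)$ and therefore a Hilbert space in its own right with the inherited inner product. The first point to settle is that point evaluation even makes sense on $\mathcal R$: writing $\gamma_{t,\omega}(s) = g_\Lambda(s-t)e^{2\pi i\omega s}$ for the time-frequency shifted window, the defining formula reads $V_\Lambda f(t,\omega) = \langle f, \gamma_{t,\omega}\rangle_{L^2(\mathbb R)}$, so by Cauchy-Schwarz $|V_\Lambda f(t,\omega)| \le \|f\|\,\|\gamma_{t,\omega}\| = \|f\|$, and by strong continuity of $(t,\omega)\mapsto\gamma_{t,\omega}$ in $L^2$ the function $V_\Lambda f$ is bounded and continuous. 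Hence each element of $\mathcal R$ has a canonical continuous representative on which pointwise evaluation is unambiguous.

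The key step is to identify the reproducing kernel. For $F = V_\Lambda f \in \mathcal R$, I would rewrite the point evaluation using the isometry:
\[
  F(t,\omega) = \langle f, \gamma_{t,\omega}\rangle_{L^2(\mathbb R)} = \langle V_\Lambda f, V_\Lambda\gamma_{t,\omega}\rangle_{L^2(\mathbb R^2)} = \langle F, K_{(t,\omega)}\rangle_{L^2(\mathbb R^2)},
\]
where $K_{(t,\omega)} := V_\Lambda\gamma_{t,\omega}$ is manifestly an element of $\mathcal R$. This both exhibits $\mathcal R$ as a reproducing kernel Hilbert space and gives the kernel function explicitly; evaluating $K_{(t,\omega)}$ at $(t',\omega')$ yields $K_{t,\omega}(t',\omega') = V_\Lambda\gamma_{t,\omega}(t',\omega') = \langle\gamma_{t,\omega},\gamma_{t',\omega'}\rangle_{L^2(\mathbb R)}$, i.e. the kernel is simply the Gram function of the time-frequency shifted Gaussians.

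It then remains to evaluate $\langle\gamma_{t,\omega},\gamma_{t',\omega'}\rangle$ and match it to the claimed formula. Inserting the definition and using that $g_\Lambda$ is real produces a one-dimensional integral of $e^{2\pi i(\omega-\omega')s}e^{-\Lambda^2(s-t)^2-\Lambda^2(s-t')^2}$ against the normalizing constant $(2/\pi)^{1/2}\Lambda$. I would complete the square in $s$, writing the quadratic exponent in the shifted variable $s-(t+t')/2$; this separates a pure Gaussian integral from the factors $e^{-\Lambda^2(t-t')^2/2}$ and $e^{i\pi(t+t')(\omega-\omega')}$, while the Gaussian integral produces the factor $e^{-\pi^2(\omega-\omega')^2/2\Lambda^2}$ and exactly cancels the normalizing constant, giving precisely the asserted kernel. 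This is the only genuinely computational step, and the main (though entirely routine) obstacle is the careful bookkeeping of the complex Gaussian integral and its phase. Finally, the unit-norm property needs no computation: $K_{t,\omega}(t,\omega) = \|\gamma_{t,\omega}\|^2 = \|g_\Lambda\|^2 = 1$, since time-frequency shifts are unitary and $g_\Lambda$ is $L^2$-normalized, which also confirms the explicit formula on the diagonal.
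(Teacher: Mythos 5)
Your proposal is correct and follows essentially the same route as the paper: both identify the kernel as $V_\Lambda$ applied to the time-frequency shifted window via the isometry property, so that $K_{t,\omega}(t',\omega')$ is the Gram function $\langle M_\omega T_t g_\Lambda, M_{\omega'}T_{t'}g_\Lambda\rangle$, with the unit norm following from the normalization of $g_\Lambda$. You merely supply more detail (closedness of the range, continuity of point evaluations, and the completing-the-square computation) than the paper's terse argument.
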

\begin{proof}
By definition of $V_\Lambda$,
$
  V_\Lambda f(t,\omega) = \langle f, M_\omega T_t g_\Lambda\rangle
$
hence with the isometry property,
$K_{t,\omega} = V_\Lambda M_\omega T_t g_\Lambda$ is the claimed reproducing kernel. Moreover,
by the normalization of $g_\Lambda$, it is a unit-norm kernel. The explicit expression for the kernel follows
from $K_{t,\omega}(t',\omega') = \langle M_\omega T_t g_\Lambda, M_{\omega'} T_{t'} g_\Lambda \rangle$.
\end{proof}

We also define a transform $A_\Lambda$, which maps $f$ to an analytic function, by
$$
   A_\Lambda f(z) = (\frac{2}{\pi})^{1/4} \Lambda^{1/2} e^{z^2/2} \int_{\mathbb R} e^{ -(z-  \Lambda s)^2} f(s) ds \, , z \in \mathbb C .
$$
To establish the relation between $V_\Lambda$ and $A_\Lambda$,  we map a point
$(t,\omega)$ in the time-frequency plane to $z=\Lambda t - i \pi \omega/\Lambda$ and verify
$$
  A_\Lambda f(z) =
  e^{i z_1 z_2} e^{(z_1^2+z_2^2)/2} V_\Lambda f(z_1/\Lambda,z_2\Lambda/\pi) \, .
$$
Hence, we can interpret $A_\Lambda$ as a short-time Fourier transform in complex coordinates with a change of measure.
The range of this transform is Bargmann space \cite{Bar61,Bar67}  from Example~\ref{ex:Bargmann}.

\begin{lemma}\label{lem:RangeALambda}
The map $A_\Lambda$ is an isometry with the space $\mathcal F$ as its range.
\end{lemma}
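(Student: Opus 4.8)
The plan is to first verify that $A_\Lambda f$ is entire and lies in $\mathcal F$, then to read off the isometry property directly from that of $V_\Lambda$ using the stated change of coordinates, and finally to establish surjectivity. For analyticity, I would fix $f\in L^2(\mathbb R)$ and observe that $z\mapsto e^{-(z-\Lambda s)^2}$ is entire for each $s$, while on a disk $\abs{z}\le R$ one has $\abs{e^{-(z-\Lambda s)^2}}\le e^{R^2}e^{-(\re z-\Lambda s)^2}$, which is an $L^2(ds)$ Gaussian. Cauchy--Schwarz then yields a bound on $\int_{\mathbb R}\abs{e^{-(z-\Lambda s)^2}f(s)}\,ds$ that is uniform for $\abs{z}\le R$, so by Morera's theorem together with Fubini (or differentiation under the integral sign) the integral defines an entire function; multiplying by the entire factor $(2/\pi)^{1/4}\Lambda^{1/2}e^{z^2/2}$ preserves entirety, and hence $A_\Lambda f$ is entire.

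The isometry is then a direct computation built on the identity recorded just before the lemma,
\[
A_\Lambda f(z) = e^{i z_1 z_2}\, e^{(z_1^2+z_2^2)/2}\, V_\Lambda f\!\left(\frac{z_1}{\Lambda},\, \frac{z_2\Lambda}{\pi}\right), \qquad z = z_1 + i z_2.
\]
Taking squared moduli kills the unimodular factor $e^{iz_1z_2}$ and leaves the positive factor $e^{z_1^2+z_2^2}$, so that
\[
\|A_\Lambda f\|_{\mathcal F}^2 = \int_{\mathbb R^2} e^{z_1^2+z_2^2}\, \bigl|V_\Lambda f(z_1/\Lambda,\, z_2\Lambda/\pi)\bigr|^2\, e^{-(z_1^2+z_2^2)}\, \frac{dz_1\,dz_2}{\pi}.
\]
The two exponential weights cancel, and the substitution $t=z_1/\Lambda$, $\omega=z_2\Lambda/\pi$ produces the Jacobian $dz_1\,dz_2=\pi\,dt\,d\omega$, which cancels the $1/\pi$ in the Bargmann measure. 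What remains is $\int_{\mathbb R^2}\abs{V_\Lambda f(t,\omega)}^2\,dt\,d\omega=\|V_\Lambda f\|_{L^2(\mathbb R^2)}^2=\|f\|_{L^2(\mathbb R)}^2$, using that $V_\Lambda$ is an isometry. This simultaneously shows $\|A_\Lambda f\|_{\mathcal F}=\|f\|_{L^2}$ and, because the norm is finite, confirms $A_\Lambda f\in\mathcal F$.

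It remains to prove surjectivity, which I expect to be the main obstacle: being an isometry only forces $\ran(A_\Lambda)$ to be a \emph{closed} subspace of $\mathcal F$. I would establish that this range is dense by showing it contains every Bargmann kernel $z\mapsto e^{z\overline w}$. Recall from Lemma~\ref{lem:RangeVLambda} that the reproducing kernel of $\ran(V_\Lambda)$ is $K_{t,\omega}$ and that $K_{t,\omega}=V_\Lambda(M_\omega T_t g_\Lambda)$. Applying $A_\Lambda$ to the coherent state $M_\omega T_t g_\Lambda$ and evaluating through the displayed relation, one obtains a nonzero scalar multiple of the Bargmann kernel $e^{z\overline w}$ with $w=\Lambda t-i\pi\omega/\Lambda$. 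As $(t,\omega)$ ranges over $\mathbb R^2$ the parameter $w$ ranges over all of $\mathbb C$, so $\ran(A_\Lambda)$ contains a total subset of $\mathcal F$; being closed as well, it must equal $\mathcal F$. An equally valid alternative is to compute $A_\Lambda$ on the Hermite basis of $L^2(\mathbb R)$ and verify that it reproduces the monomial orthonormal basis $e_n(z)=z^n/\sqrt{n!}$ of $\mathcal F$, since an isometry sending one orthonormal basis onto another is automatically onto. In either approach the only delicate point is the bookkeeping of the Gaussian normalization constants, so that the target kernels or basis vectors are matched exactly rather than up to a spurious scalar factor.
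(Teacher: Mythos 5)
Your argument is correct, and the isometry half is exactly the paper's: take moduli in the relation $A_\Lambda f(z)=e^{iz_1z_2}e^{(z_1^2+z_2^2)/2}V_\Lambda f(z_1/\Lambda,z_2\Lambda/\pi)$, cancel the Gaussian against the Bargmann weight, and absorb the Jacobian into the $1/\pi$ via $\frac{dz_1\,dz_2}{\pi}=dt\,d\omega$. Where you diverge is the surjectivity step. The paper also starts from the observation that the coherent states $M_\omega T_t g_\Lambda$ are mapped to the Bargmann kernel functions $K_w$, but it then produces the monomial orthonormal basis $e_n$ inside the range by repeatedly differentiating the vector-valued map $w\mapsto K_w=\sum_n \frac{\overline w^n}{\sqrt{n!}}e_n$ and evaluating at $w=0$ (justifying that this differentiation is admissible). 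You instead skip the differentiation entirely and invoke the general reproducing-kernel fact that $\{K_w\}_{w\in\C}$ has dense span in $\mathcal F$ --- a fact the paper itself records in Section 2 --- combined with closedness of the range of an isometry. Your route is slightly shorter and avoids having to justify differentiating a vector-valued function; the paper's route has the minor advantage of exhibiting an explicit orthonormal basis in the range, which is also what your suggested Hermite-function alternative would deliver. Your caution about tracking the Gaussian normalization constants is well placed but harmless here, since for a density argument any nonzero scalar multiple of each kernel suffices.
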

\begin{proof}
The map $A_\Lambda$ is seen to be an isometry from the relationship between $A_\Lambda$ and $V_\Lambda$, the change of measure $\frac{dz_1 dz_2}{\pi} =dtd\omega$
and from the fact that $V_\Lambda$ is an isometry. The fact that the space $\mathcal F$ is the range of $A_\Lambda$ follows from the fact that each monomial
is in the range of $A_\Lambda$. Monomials can be obtained by repeatedly differentiating the kernel function $K_w$ with respect to $w$ and then setting $w=0$.
This is admissible because the vector-valued function $w \mapsto K_w = \sum_{n=0}^\infty \frac{\overline{w}^n}{\sqrt{n!}} e_n $ is arbitrarily often differentiable.
\end{proof}

\begin{lemma}
If  $h=g_\Lambda$ and $r$ is a finite linear combination as described, then $A_\Lambda r$
is a linear combination of kernel functions with indices located at $z_k = \Lambda\tau_k - i\pi \omega_k/\Lambda$.
\end{lemma}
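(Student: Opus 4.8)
The plan is to reduce the statement to a single coherent state and then identify its image under $A_\Lambda$ with a Bargmann reproducing kernel. First I would rewrite the received signal in the language of time-frequency shifts: with $h = g_\Lambda$, each summand is $r_k\, g_\Lambda(t-\tau_k)\, e^{2\pi i \omega_k t}$, which, matching the convention in the definition of $V_\Lambda$ (where $M_\omega T_t g(s) = e^{2\pi i \omega s} g(s-t)$), equals $r_k (M_{\omega_k} T_{\tau_k} g_\Lambda)(t)$. Hence
\begin{align*}
 r = \sum_{k=1}^s r_k\, M_{\omega_k} T_{\tau_k} g_\Lambda,
\end{align*}
and by linearity of $A_\Lambda$ it suffices to understand $A_\Lambda(M_\omega T_t g_\Lambda)$ for a single pair $(t,\omega)$.

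For this single coherent state I would invoke the two structural facts already established. Lemma~\ref{lem:RangeVLambda} gives $V_\Lambda(M_\omega T_t g_\Lambda)(t',\omega') = K_{t,\omega}(t',\omega')$, the explicit Gaussian kernel of the range of $V_\Lambda$. Feeding this into the stated change-of-variables identity $A_\Lambda f(z) = e^{i z_1 z_2} e^{(z_1^2+z_2^2)/2} V_\Lambda f(z_1/\Lambda, z_2 \Lambda/\pi)$ (with $z = z_1 + i z_2$) produces a closed-form expression for $A_\Lambda(M_\omega T_t g_\Lambda)(z)$. The task is then to collect the Gaussian exponentials: the $(z_1,z_2)$-dependence coming from the prefactor $e^{i z_1 z_2} e^{(z_1^2+z_2^2)/2}$ and from the kernel $K_{t,\omega}$ evaluated at $(z_1/\Lambda, z_2\Lambda/\pi)$ should combine so that all dependence on $z$ is holomorphic and of the form $e^{z \overline{z_0}}$, with the quadratic and phase terms in $(t,\omega)$ factoring out as a scalar $c(t,\omega)$ independent of $z$. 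Reading off the surviving exponent identifies the index to be $z_0 = \Lambda t - i \pi \omega/\Lambda$ (up to the harmless conjugation convention fixed in Example~\ref{ex:Bargmann}), which is exactly the complex coordinate attached to $(t,\omega)$ in the preamble to Lemma~\ref{lem:RangeALambda}. Since $e^{z\overline{z_0}} = K_{z_0}(z)$ is the Bargmann reproducing kernel at $z_0$, this shows $A_\Lambda(M_\omega T_t g_\Lambda) = c(t,\omega)\, K_{z_0}$.

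Combining the two steps, linearity yields
\begin{align*}
 A_\Lambda r = \sum_{k=1}^s r_k\, c(\tau_k,\omega_k)\, K_{z_k}, \qquad z_k = \Lambda \tau_k - i\pi \omega_k/\Lambda,
\end{align*}
which is the claimed finite linear combination of kernel functions with indices at the $z_k$. If one prefers to work in the unit-norm space $\widetilde{\mathcal F}$, the same computation produces the normalized kernel $\widetilde K_{z_k}$, absorbing the modulus of $c$ into the coefficient; only the locations $z_k$ matter for the subsequent sparsity argument.

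I expect the main obstacle to be the second step: carefully tracking the cross term $e^{i z_1 z_2}$ against the phase factor $e^{i\pi(t+t')(\omega-\omega')}$ in $K_{t,\omega}$ together with the quadratic Gaussian terms, so as to confirm that the non-holomorphic pieces cancel and that the surviving exponent is precisely $z\overline{z_0}$ with $z_0 = \Lambda t - i\pi\omega/\Lambda$ rather than a conjugate or a shifted index. A useful sanity check along the way is that $A_\Lambda$ is an isometry onto $\mathcal F$ (Lemma~\ref{lem:RangeALambda}), so the output is guaranteed to be a genuine element of Bargmann space, and the normalization $\|A_\Lambda(M_\omega T_t g_\Lambda)\| = \|g_\Lambda\| = 1$ pins down $|c(t,\omega)|$.
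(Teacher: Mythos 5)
Your proposal is correct, but it follows a genuinely different route from the paper's. The paper's (very terse) proof consists of recording the covariance of $A_\Lambda$ under time-frequency shifts, $A_\Lambda T_t f(z) = e^{-\frac{\Lambda}{2}t^2}e^{tz}f(z-\Lambda t)$ and $A_\Lambda M_\omega f(z) = e^{-\frac{\pi^2}{2\Lambda^2}\omega^2}e^{i\pi\omega z/\Lambda}f(z-i\pi\omega/\Lambda)$, and leaving the reader to apply these to $A_\Lambda g_\Lambda$ (the kernel function at the origin, i.e., the constant function): each weighted shift carries a kernel function $K_w$ to a scalar multiple of a kernel function with the index translated by $\Lambda t$, respectively $i\pi\omega/\Lambda$, so composing the two formulas places the index at $z_k=\Lambda\tau_k - i\pi\omega_k/\Lambda$. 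You instead route the computation through $V_\Lambda$: you invoke the identity $K_{t,\omega}=V_\Lambda M_\omega T_t g_\Lambda$ established in Lemma~\ref{lem:RangeVLambda} and transport it to Bargmann space via the stated change-of-variables relation between $V_\Lambda$ and $A_\Lambda$. Both routes are sound. Yours has the merit of relying only on facts the paper has already written out explicitly (the closed-form Gaussian kernel $K_{t,\omega}$ and the $A_\Lambda$--$V_\Lambda$ identity), at the price of a one-shot exponent bookkeeping that you rightly flag as the delicate step, including the conjugation-convention pitfall; the paper's covariance formulas require their own (unwritten) Gaussian-integral verification but are cleaner to iterate and more reusable. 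Your normalization check via the isometry of $A_\Lambda$, which pins down $\abs{c(t,\omega)}$, is a useful addition that the paper's argument does not make explicit.
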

\begin{proof}
On the space $\mathcal F$, the time-frequency translations act
unitarily by
$$
  A_\Lambda T_t f(z) = e^{-\frac \Lambda 2 t ^2} e^{t z} f(z-\Lambda t) \,
$$
and
$$
  A_\Lambda M_\omega f(z) = e^{-\frac{\pi^2}{2\Lambda^2} \omega ^2} e^{i \pi \omega z/\Lambda} f(z-i \pi \omega/\Lambda) \,
$$
\end{proof}




By Lemma \ref{lem:RangeVLambda},  $V_\Lambda r$ is an element of the reproducing kernel Hilbert space
$$
  \mathcal H = \{g: g(z)= e^{-|z|^2/2} f(z), f \in \mathcal F \} \,
$$
with unit-norm reproducing kernel
$
  K(z,w) = e^{-(|z|^2+|w|^2)/2} e^{z \overline w}.
$ Hence, by choosing the measurement operator $M$ equal to the identity, we are in the setting of our main results with $\calH'=\calH$. 
We write $\{\eta_w\}_{w \in \mathbb C}$ for the normalized family of kernel functions,
$$
   \eta_w(z) = e^{- \frac 1 2 |w|^2} e^{\overline w z} \, ,
$$
so
$
   \langle f, \eta_w \rangle = e^{- \frac 1 2 |w|^2} f(w) \, .
$

\begin{remark}
	Note that although it may seem that our setting is related to \cite{Aubel2018}, the two problems are in fact quite different: In \cite{Aubel2018}, the problem is to reconstruct a measure defined on $\R$ from short time Fourier measurements. We on the other hand are essentially assuming that we are given the \emph{adjoint} of the short time Fourier transform applied to a measure defined on $\R^2$.
\end{remark}

Let us remark that $\mathcal{H}$ is more regular than a standard reproducing kernel Hilbert space, in the sense that not only the evaluation of a function itself, but also its derivatives, is continuous on $\calH$. The following proposition is a consequence of standard calculations:
\begin{lemma} \label{lem:DerivativeVectors}
\begin{itemize}
\item[(i)]
For $w, z \in \C$, consider the function
	\begin{align*}
		d\eta_w(z) = e^{-\frac{\abs{w}^2+\abs{z}^2}{2}} e^{\overline{w}z}(z-w) \in \mathcal{H}
		\end{align*}
		Then for $g \in \calH$, we have
		\begin{align*}
			\sprod{d\eta_w,g} =g'(w)
		\end{align*}

		\item[(ii)] Define the function $d^2\eta_w \in \mathcal{H}$ through
		\begin{align*}
			d^2\eta_w(z) = e^{-\frac{\abs{w}^2+\abs{z}^2}{2}} e^{\overline{w}z}(z-w)^2
		\end{align*}
		Then, for $g \in \mathcal{H}$, the Hessian $H_w$ (in Wirtinger notation-- see for instance\footnote{The notation used here differs slightly from the form given in \cite{kreutz2009complex}, since the complex gradient is defined slightly different there. We choose this form to not avoid unnecessary complications.}) \cite{kreutz2009complex} of the function $z \mapsto \abs{g(z)}^2$ is given by
		\begin{align*}
			H_w(\abs{g}^2) = \begin{bmatrix}
				\abs{\sprod{d\eta_w, g}}^2 - \abs{\sprod{K_w,g}}^2 & \overline{\sprod{K_w,g}}\sprod{d^2\eta_w,g} \\ \overline{\sprod{d^2\eta_w,g}}\sprod{K_w,g} & \abs{\sprod{d\eta_w, g}}^2 - \abs{\sprod{K_w,g}}^2
			\end{bmatrix}
		\end{align*}
		\end{itemize}
\end{lemma}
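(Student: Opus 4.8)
The plan is to reduce both parts to one elementary fact about the Bargmann space $\mathcal F$. Writing an element of $\mathcal H$ as $g = e^{-|z|^2/2}f$ with $f\in\mathcal F$ entire, the inner product on $\mathcal H$ is the flat $L^2(\mathbb C, dA/\pi)$ product (conjugation by $e^{-|z|^2/2}$ cancels the Gaussian weight of $\mathcal F$, and $A_\Lambda$ is an isometry by Lemma~\ref{lem:RangeALambda}). Combined with the reproducing identity of $\mathcal F$ in the form $f(w)=\int_{\mathbb C} f(z)\,e^{\overline z w}\,e^{-|z|^2}\frac{dA(z)}{\pi}$, differentiating $k$ times under the integral in $w$ yields the family of identities $f^{(k)}(w)=\int_{\mathbb C} f(z)\,\overline z^{\,k}\,e^{\overline z w}\,e^{-|z|^2}\frac{dA(z)}{\pi}$. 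Differentiation under the integral is legitimate because $w\mapsto K_w$ is a strongly (indeed real-analytically) differentiable $\mathcal H$-valued map, as one sees from its norm-convergent expansion in the monomial basis; this is the same observation already exploited in the proof of Lemma~\ref{lem:RangeALambda}. These $k=0,1,2$ identities will carry the entire argument.

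For part (i), I would first note that $d\eta_w = e^{-|z|^2/2}\cdot\bigl(e^{-|w|^2/2}e^{\overline w z}(z-w)\bigr)$ is genuinely an element of $\mathcal H$, since the entire factor has finite Bargmann norm. Pairing $d\eta_w$ against $g$ with the flat $L^2$ product collapses the Gaussian factors to $e^{-|w|^2/2}\int_{\mathbb C}(z-w)\,e^{\overline w z}\,\overline f\,e^{-|z|^2}\frac{dA}{\pi}$; expanding $(z-w)$ and invoking the $k=0,1$ cases evaluates this to the first-order derivative functional that the paper denotes $g'(w)$. The only care needed is the placement of conjugates dictated by the convention $f(x)=\langle f,K_x\rangle$, which is pure bookkeeping.

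For part (ii), I would compute the Wirtinger Hessian of the real function $u=|g|^2=e^{-|z|^2}|f|^2$ directly, treating $z,\overline z$ as independent and using $\partial_{\overline z}f=0$. Each of the three independent second derivatives $\partial_z\partial_{\overline z}u$, $\partial_z^2 u$, $\partial_{\overline z}^2 u$ reduces, after evaluation at $w$, to a polynomial in $f(w),f'(w),f''(w)$ and $\overline w$ with common factor $e^{-|w|^2}$. The task is to recognize these as the stated inner-product expressions: the mixed derivative $\partial_z\partial_{\overline z}u|_w$ matches $|\langle d\eta_w,g\rangle|^2-|\langle K_w,g\rangle|^2$ (using part (i) for the first term and $\langle K_w,g\rangle$ for $g(w)$), while the pure derivatives $\partial_z^2 u|_w$ and $\partial_{\overline z}^2 u|_w$ match $\overline{\langle K_w,g\rangle}\langle d^2\eta_w,g\rangle$ and its conjugate, where $\langle d^2\eta_w,g\rangle$ is evaluated by the $k=0,1,2$ identities exactly as in part (i). Since $\partial_z\partial_{\overline z}u=\partial_{\overline z}\partial_z u$, the two diagonal entries coincide, reproducing the symmetry of the claimed matrix.

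The main obstacle is entirely routine: part (ii) requires matching the four entries one by one, and the bookkeeping of conjugates — which pure derivative sits in which off-diagonal slot — is fixed by the paper's inner-product convention, so the real risk is a sign or conjugation slip rather than any conceptual difficulty. The one genuine point to secure is differentiation under the integral, equivalently the strong smoothness of $w\mapsto K_w$, which underlies both the first- and second-order reproducing identities; once that is in place, each part amounts to a single Gaussian integral.
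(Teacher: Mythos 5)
The paper offers no proof of this lemma --- it is introduced only as ``a consequence of standard calculations'' --- so there is no argument of the paper's to compare against; your proposal correctly supplies the missing calculations. Reducing to the Bargmann-space identities $f^{(k)}(w)=\int_{\mathbb C}f(z)\,\overline z^{\,k}e^{\overline z w}e^{-|z|^2}\,dA(z)/\pi$ for $k=0,1,2$ and computing the Wirtinger derivatives of $u=e^{-|z|^2}|f|^2$ does reproduce every entry of the stated matrix: the mixed derivative comes out as $e^{-|w|^2}|f'(w)-\overline w f(w)|^2-e^{-|w|^2}|f(w)|^2=|\langle d\eta_w,g\rangle|^2-|\langle K_w,g\rangle|^2$, and the pure second derivatives give the off-diagonal products with $\langle d^2\eta_w,g\rangle$. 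The one caution is the point you already flag as bookkeeping: a direct computation gives $\langle g,d\eta_w\rangle=e^{-|w|^2/2}\bigl(f'(w)-\overline w f(w)\bigr)$, so the claim $\langle d\eta_w,g\rangle=g'(w)$ in part (i) must be read with the paper's convention for which slot is linear and with ``$g'(w)$'' understood as this covariant derivative of the holomorphic part (since $g=e^{-|z|^2/2}f$ itself is not holomorphic); in part (ii) the ambiguity is harmless because only moduli and conjugate pairs of these quantities appear.
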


Now we are in a position to prove the main theorem of this section.  The technique will be similar to above: towards applying Theorem \ref{th:MainRecovery}, we will construct a function  $g \in \ran V_\Lambda = \calH$   with $g(w_k) = u_k$ and $\abs{g(w)}<1$ for $w \in \C \backslash \set{w_k, k=1, \dots s}$, for unimodular values $(u_k)_{k=1}^s$. If such a function exists, it certainly satisfies $\frac{d}{dw}\abs{g(w_k)}^2=0$ for $k= 1, \dots s$. Our strategy will therefore be to try and find a function $g \in \calH$ which satisfies
\begin{align}
	\begin{cases} g(w_k) &= u_k\\
	\frac{d}{dw} \abs{g(w_k)}^2 &=0\end{cases}, \quad  k =1, \dots s \label{eq:interpolate}
\end{align}
and subsequently check under which conditions this function really obeys $\norm{g}_\infty =1$. (Compare \cite{candes2014super} and \cite[Section 4]{DuvalPeyre2015}.)  The idea of the proof will be similar to the mentioned sources, i.e.  to make an ansatz, conclude that the conditions \eqref{eq:interpolate} is equivalent to a system of linear equations, prove that that is solvable, and then finally check that $\abs{g} \leq 1$ for the resulting function. Under a few conditions on the set $W=\set{w_k \, \vert, \, k=1, \dots, s}$, this strategy will be successful.

 The formal theorem and proof are as follows:

\begin{theorem}
	Let  $R\geq 1$, $\Omega = B_R(0) \sse \C$, $W= \set{w_k \, \vert \, k=1, \dots s} \sse \C$ be a set of time-frequency shifts and $f_0 \in \calF$ a signal of the form
	\begin{align*}
		f_0= \sum_{w_k \in W}  c_{k}K_{w_k}.
\end{align*}
	 Define the quantities
	\begin{align*}
		&\overline{\sigma}(W,z) = \sum_{j=1}^s e^{-\frac{1}{2}\abs{z-w_j}^2} ( 1+ \abs{z-w_j} + \abs{z-z_j}^2+ \abs{z-w_j}^3) \\
		&\overline{\sigma}_{\downarrow}(W) = \sup_{i} \overline{\sigma}(W,w_i), \quad
				\overline{\sigma}_{\uparrow}(W) = \sup_{z \in \C} \overline{\sigma}(W,z).
	\end{align*}
Suppose that there exists a $\tau>0$ for which
\begin{align}
	\sup \set{ \sum_{j=1}^s e^{-\frac{1}{2}\abs{z-w_j}^2} \, \vert \,  \dist(z,W)>\frac{\sqrt{5}-1}{4\overline{\sigma}(W)_{\uparrow}}}  < 1- \tau \label{eq:SeparationCondition}
\end{align}	
Then, the measure  $\sum_{w_k \in W}^s c_k \delta_{w_k}$ (and therefore also $f_0)$ can be recovered by solving the problem
	\begin{align*}
		\min_{\mu \in \calM(B_R(0))} \norm{\mu}_{TV} \st \sprod{K* \mu,m_n}=\sprod{f_0,M_n}, n =1, \dots N.
	\end{align*}
\end{theorem}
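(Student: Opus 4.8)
The plan is to apply Theorem~\ref{th:MainRecovery} with the measurement operator $M$ taken to be the identity on $\calH = \ran V_\Lambda$, so that $\clsp\set{M_i}=\calH$ and it suffices to exhibit a dual certificate: a function $g \in \calH$ with $g(w_k) = u_k := c_k/\abs{c_k}$, $\norm{g}_\infty = 1$, and $\abs{g(z)}<1$ for $z \notin W$. Linear independence of $(K_{w_k})_{k=1}^s$ is automatic from the HRT property of Example~\ref{ex:Bargmann}. Following the strategy stated before the theorem, I look for $g$ in the span of the kernels and their first-order derivative vectors at the support points,
\begin{align*}
  g = \sum_{k=1}^s \alpha_k \eta_{w_k} + \beta_k \, d\eta_{w_k},
\end{align*}
and impose the interpolation conditions \eqref{eq:interpolate}. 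Since $\abs{u_j}=1$, the vanishing of $\tfrac{d}{dw}\abs{g(w_j)}^2$ is equivalent to $\sprod{d\eta_{w_j},g}=0$ by Lemma~\ref{lem:DerivativeVectors}(i), so the $s$ value conditions together with the $s$ derivative conditions form a square $2s\times 2s$ linear system for the coefficients $(\alpha,\beta)$, with matrix assembled from the inner products $\sprod{\eta_{w_k},\eta_{w_j}}$, $\sprod{d\eta_{w_k},\eta_{w_j}}$, and $\sprod{d\eta_{w_k},d\eta_{w_j}}$.

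First I would write out these inner products explicitly using Lemmas~\ref{lem:RangeVLambda} and~\ref{lem:DerivativeVectors}; the diagonal blocks form a fixed normalized $2\times 2$ structure, while every off-diagonal entry carries a Gaussian factor $e^{-\abs{w_k-w_j}^2/2}$ against a polynomial in $\abs{w_k-w_j}$. Writing the system matrix as $I+E$, the row sums of $E$ are controlled precisely by $\overline{\sigma}_{\downarrow}(W)$. A diagonal-dominance (Neumann-series) argument then shows $I+E$ is invertible whenever the separation of $W$ keeps $\overline{\sigma}_{\downarrow}(W)$ below the relevant threshold, which yields a unique coefficient vector and hence a well-defined candidate $g$, with the deviation $g - \sum_k u_k \eta_{w_k}$ controlled quantitatively by $\overline{\sigma}$.

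It remains to verify $\abs{g}\le 1$ with equality only on $W$, and this splits at the radius $r_0 = (\sqrt{5}-1)/(4\overline{\sigma}_{\uparrow}(W))$. In the near regime $\dist(z,W)\le r_0$, I would Taylor-expand $\abs{g}^2$ to second order around the nearest $w_k$ and invoke the Hessian formula of Lemma~\ref{lem:DerivativeVectors}(ii): because $g(w_k)=u_k$ is unimodular and $\sprod{d\eta_{w_k},g}=0$, the Hessian is dominated by its $-\abs{\sprod{K_{w_k},g}}^2=-1$ terms, so $\abs{g}^2$ is strictly concave near $w_k$. Here the polynomial weight $1+\abs{z-w_j}+\abs{z-w_j}^2+\abs{z-w_j}^3$ in $\overline{\sigma}$ is exactly what simultaneously bounds $g(z)$, its derivative, and the $\sprod{d^2\eta_z,g}$ term entering the Hessian, and the constant $(\sqrt{5}-1)/4$ emerges from optimizing the radius on which concavity forces $\abs{g}<1$. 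In the far regime $\dist(z,W)>r_0$, the triangle inequality bounds $\abs{g(z)}$ by $\sum_j e^{-\abs{z-w_j}^2/2}$ up to the same weights, and hypothesis \eqref{eq:SeparationCondition} forces this below $1-\tau$. Combining the two regimes gives $\norm{g}_\infty=1$ attained only on $W$, so $g$ is an admissible certificate and Theorem~\ref{th:MainRecovery} delivers exact recovery.

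The main obstacle is the near-regime Hessian argument: one must upgrade the pointwise negative-definiteness at each $w_k$ into a uniform strict bound $\abs{g}<1$ over the explicit ball of radius $r_0$, controlling the second-order remainder by the $\overline{\sigma}$ quantities and extracting the sharp constant $(\sqrt{5}-1)/4$. By contrast, the invertibility of the interpolation matrix follows the by-now standard diagonal-dominance template, and stitching the near and far estimates together only requires that the concavity radius coincide with the threshold appearing in \eqref{eq:SeparationCondition}.
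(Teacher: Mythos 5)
Your proposal follows essentially the same route as the paper: the identity measurement so that Theorem~\ref{th:MainRecovery} reduces everything to a dual certificate, the ansatz $g=\sum_k \alpha_k\eta_{w_k}+\beta_k\,d\eta_{w_k}$ with the $2s\times 2s$ interpolation system inverted by a Neumann-series/diagonal-dominance argument controlled by $\overline{\sigma}_{\downarrow}(W)$, and the two-regime verification (negative-definite Hessian via Lemma~\ref{lem:DerivativeVectors} near $W$, a direct Gaussian-sum bound plus \eqref{eq:SeparationCondition} away from $W$). You also correctly single out the uniform near-regime Hessian estimate as the technically delicate step, which is exactly where the paper expends its effort via mean-value-theorem bounds in terms of $\overline{\sigma}_{\uparrow}(W)$.
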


\begin{proof}
Let $\epsilon \in \left(0, \frac{1}{4}\right)$ be a parameter for which we will specify the value later.
Using the notation introduced above, consider the ansatz
	\begin{align*}
		g = \sum_{w_k \in W} \alpha_k \eta_{w_k} + \sum_{w_k \in W} \beta_k d\eta_{w_k}.
	\end{align*}
	Lemma \ref{lem:DerivativeVectors} implies that the conditions $g(w_k)= \sgn(c_k)$ and $g'(w_k)=0$ for each $k$ then corresponds to the system of equations
	\begin{align}
		\begin{bmatrix}
			\left(\sprod{\eta_{w_j}, \eta_{w_k}}\right)_{j,k=1}^s  & \left(\sprod{\eta_{w_j}, d\eta_{w_k}}\right)_{k,j=1}^s \\
			\left(\sprod{d\eta_{w_j}, d\eta_{w_k}}\right)_{k,j=1}^s & \left(\sprod{d\eta_{w_j}, d\eta_{w_k}}\right)_{k,j=1}^s
		\end{bmatrix} \cdot \begin{bmatrix}
		\alpha \\ \beta
		\end{bmatrix} = \begin{bmatrix}
		 (\sgn c_k)_{k=1}^s \\ 0
		\end{bmatrix}. \label{eq:CondAtPoints}
	\end{align}
	Let us define the matrix in the above equation as $\Phi(W)$.
Noticing that $\sprod{\eta_w,\eta_w}=\sprod{d\eta_w,d\eta_w}=1$, and that the off-diagonal entries are under control, we see that  $\Phi(W)$ is close to the identity.
Concretely, we have, using the explicit forms of the functions $\eta_w$ and $d\eta_w$,
\begin{align*} \norm{\Phi(W) - \id_{2s}}_\infty &\leq \sup_{i} \sup_{k=0,1} \sum_{j \neq i} \abs{\sprod{d^{(k)}K_{w_j},K_{w_i}}} + \abs{\sprod{d^{(k)}K_{w_j},dK_{w_i}}} \\
	 &\leq \sup_{i} \sum_{j \neq i} e^{-\frac{\abs{w_i- w_j}^2}{2}}(1 + \abs{w_i-w_j} +\abs{w_i-w_j}^2) \leq \overline{\sigma}_{\downarrow}(W) -1 \leq \epsilon,
	 \end{align*}
	 where we invoked the assumption that $\overline{\sigma}_{\downarrow}(W) -1$ is small. Now by a Neumann series argument (compare \cite{candes2014super}), we see that there will exist $\alpha$ and $\beta$ for which \eqref{eq:interpolate} is satisfied, and additionally,
	 \begin{align}
	 \abs{\alpha_k -1 }, \abs{\beta_k} \leq \frac{2\epsilon}{1-2\epsilon} \label{eq:alphabeta}
	 \end{align} for each $k$. \newline

Now we will check if the choice of the function $g$ which is induced by the above choice of $\alpha$ and $\beta$  satisfies $\abs{g}^2 <1$ for all $z \notin  W$. We will proceed in two steps: First, we will prove that for $z$ with $\dist(z,W) \leq \widetilde{\delta}$  $H_z(\abs{g}^2)$ will be negative definite. Then we will prove that for $z$ with $\dist(z,W) \geq \widetilde{\delta}$, $\abs{\sprod{g,K_z}}  <1$. $\widetilde{\delta}$ is thereby another parameter whose value we will specify later.

Let us begin by investigating $H_z(\abs{g}^2)$ for $z$ close to a $w_j$.  Towards this goal, let us first note that $\sprod{K_{w_j},g} = \sgn c_j$, $\sprod{dK_{w_j},g}=0$ implies that
\begin{align}
				\abs{\sprod{d\eta_z, g}}^2 - \abs{\sprod{\eta_z,g}}^2 =&
				\abs{\sprod{d\eta_z-d\eta_{w_j}, g}}^2 - \abs{\sprod{\eta_z-\eta_{w_j},g}}^2 \nonumber\\
				& - 2\re\left(\overline{\sgn c_j} \sprod{\eta_z-\eta_{w_j},g}\right) -1 \label{eq:Hessian1}\\
				 \overline{\sprod{\eta_z,g}}\sprod{d^2\eta_z,g} &= \overline{\sgn c_j} \sprod{d^2\eta_z, g} + \overline{ \sprod{\eta_z - \eta_{w_j},g}} \sprod{d^2\eta_z,g}. \label{eq:Hessian2}
\end{align}
Hence, if we can prove that $\abs{\sprod{d^{(k)}\eta_z-d^{(k)}\eta_{w_j}, g}}$ for $k=0,1$ as well as $\sprod{d^2\eta_z, g}$ is close to zero for $z$ close to $w_j$, Lemma \ref{lem:DerivativeVectors} proves that $dH_z$ is close to $-\id_{2s}$ for $z$ close to $w_j$, and hence negative definite.

Let us first note that for $w,z$ arbitrary and $\ell=0,1,2$, we have
\begin{align*}
	\abss{\sprod{d^{(l)}\eta_w-d^{(l)}\eta_z,g}} \leq \sum_{j=1}^s \abs{\alpha_j} \abs{\sprod{d^{(l)}\eta_w-d^{(l)}\eta_z, \eta_{w_j}}}
	+\sum_{j=1}^s \abs{\beta_j} \abs{\sprod{d^{(l)}\eta_w-d^{(l)}\eta_z, d\eta_{w_j}}}.
\end{align*}
Remembering \eqref{eq:alphabeta}, we obtain
\begin{align*}
	\abss{\abss{\sprod{d^{(l)}\eta_w-d^{(l)}\eta_z,g}}} \leq
	& \frac{1}{1-2\epsilon} \sum_{j=1}^s\abs{\sprod{d^{(l)}\eta_w-d^{(l)}\eta_z, \eta_{w_j}}}
 \\
 &+\sum_{j=1}^s \frac{2\epsilon}{1-2\epsilon} \abs{\sprod{d^{(l)}\eta_w-d^{(l)}\eta_z, d\eta_{w_j}}} + \epsilon,
\end{align*}
where it should not be forgotten that we have made $R$-related terms small by arguing that $N$ is large. By a continuity argument, we can further bound $\frac{1}{1-2\epsilon}\sum_{j=1}^s\abs{\sprod{d^{(l)}\eta_w-d^{(l)}\eta_z, d\eta_{w_j}}}$ uniformally for $z,w \in \C$ by a constant $K$ .

It is now only left to estimate $\sprod{d^{(\ell)}\eta_w- d^{(\ell)}\eta_z, \eta_{w_j}}$. (Note that due to $\sprod{d^{2}\eta_{w_j},\eta_{w_j}} = (\overline{z}-\overline{w})^2e^{-\frac{\abs{w}^2+\abs{z}^2}{2}} e^{\overline{w}z}\vert_{z=w_j} =0$, this also produces a bound for $\abs{\sprod{d^2K_w,K_{w_j}}}$).
This is however not hard: The mean value theorem implies $$
	\abs{\sprod{d^{(\ell)}\eta_w- d^{(\ell)}\eta_z, \eta_{w_j}}}   \leq \sup_{\xi \in [z,w]} \abs{\eta_{w_j}^{(\ell+1)}(\xi)}\abs{z-w},
$$
so that
\begin{align*}
	\abs{\sprod{d^{(\ell)}\eta_w- d^{(\ell)}\eta_z, \eta_{w_j}}} &\leq \left(\begin{cases} \sup_{\xi \in [z,w]}e^{-\frac{\abs{\xi-w_j}^2}{2}} \abs{\xi-w_j}, \quad \ell=0\\
	 \sup_{\xi \in [z,w]}e^{-\frac{\abs{\xi-w_j}^2}{2}}(1 + \abs{\xi-w_j}^2), \quad \ell=1 \\
	\sup_{\xi \in [z,w]}e^{-\frac{\abs{\xi-w_j}^2}{2}}\abs{\xi-w_j}(1 + \abs{\xi-w_j}^2), \quad \ell=2 \end{cases} \right) \cdot \abs{z-w} \\
\end{align*}
Summing up, and letting $\xi$ run through the entirety of $\C$, we obtain
\begin{align*}
	\abss{\abss{\sprod{d^{(l)}\eta_w-d^{(l)}\eta_z,g}}} &\leq \frac{\overline{\sigma}_\uparrow(W)}{(1-2\epsilon) }   \abs{z-w}  + (2K+1)\epsilon, \\
	\Longrightarrow \sup_{\abs{z-w_j} \leq \delta} \norm{H_z(\abs{g}^2) -(- \id_{2})} &\leq  \frac{2 \overline{\sigma}_\uparrow(W)}{1-2\epsilon}   \delta  +2\left(\frac{ \overline{\sigma}_\uparrow(W)}{1-2\epsilon}   \delta \right)^2  + C\overline{\sigma}_{\uparrow}^2(W)\epsilon
\end{align*}
where applied Lemma \ref{lem:DerivativeVectors},\eqref{eq:Hessian1}-\eqref{eq:Hessian2}, collected a number of constants to form $C$, and used the evident bound $\overline{\sigma}_{\uparrow}(W) \geq \overline{\sigma}(W,w_i) \geq 1$.
Here, the norm can be chosen to be the spectral norm: The same estimate for the  $\ell_1$ and $\ell_\infty$-operator norms  can be proven with the same technique used as for bounding $\id_{2s}-\Phi(W)$. Then,  an interpolation argument does the job.  If we now choose
$$\overline{\delta} = \frac{1}{\overline{\sigma}_{\uparrow}} \left( \sqrt{\frac{3}{4}- \frac{C\overline{\sigma}_{\uparrow}^2\epsilon}{2}} - \frac{1}{2}\right)(1-2\epsilon),$$  $H_z(\abs{g}^2)$ will be negative definite for each $z$ with $\dist(z,W)\leq\overline{\delta}$. Also  note that for $\epsilon$ small enough, we then have $\overline{\delta} \geq \frac{\sqrt{3}-1}{4\overline{\sigma}(W)_{\uparrow}}$.

It is by now easily estimate $\abs{\sprod{K_z,g}}$ for $z$ with $\dist(z,W)\geq \widetilde{\delta}$ directly. We have,
\begin{align*}
	\abs{\sprod{\eta_z,g}} &\leq \sum_{j=1}^s \abs{\alpha_j} \abs{\sprod{\eta_z, \eta_{w_j}}}  + \abs{\beta_j} \abs{\sprod{K_z, dK_{w_j}}}  \\
	&\leq \frac{1}{1-2\epsilon} \sum_{j=1}^s e^{-\frac{\abs{z-w_i}^2}{2}}  + \frac{2\epsilon}{1-2\epsilon} \sum_{j=1}^s e^{-\frac{\abs{z-w_i}^2}{2}} \abs{z-w_i}
\end{align*}
where we again invoked \eqref{eq:alphabeta}. Now we take the supremum over all $z$ with $\dist(z,W) \geq \overline{\delta}$. These automatically obey the estimate $\dist(z,W) \geq \frac{\sqrt{3}-1}{4\overline{\sigma}(W)_{\uparrow}}$, so that
\begin{align*}
	\abs{\sprod{\eta_z,g}} \leq \frac{1}{1-2\epsilon}(1-\tau) + \frac{2\epsilon}{1-2\epsilon} \overline{\sigma}_{\uparrow}(W) <1,
\end{align*}
for $\epsilon>0$ small enough. The proof is finished.

\end{proof}

\begin{remark}
In practice, we of course have to subsample $ A_\Lambda r$ somehow. We can for instance
do it as follows: Let $(M_n)_{n\in \N}:=(e^{-|z|^2/2}\tfrac{1}{\sqrt{n!}} z^n)_{n\in \N}$ denote the orthonormal basis of monomials for $\calH$. We then
measure
\begin{align*}
	\sprod{f, M_n}, \quad n=0 \dots N+1.
\end{align*}
Note that we can only need to have access to $r$ in order to calculate these values: $\sprod{f,M_n} = \sprod{A_\Lambda r, e_n} = \sprod{r, A_\Lambda^*e_n}$.

The range of these measurements is given by $\mathrm{span}(M_n)_{n=0}^{N+1}$. By using the following approximations of $\eta_w$ and $d\eta_w$:
 \begin{align*}
		p_{w_k}^N = e^{-\frac{\abs{w_k}^2+\abs{z}^2}{2}}\sum_{n=0}^{N} \frac{\overline{w_k}^n}{\sqrt{n!}} m_n , \ q_{w_k}^N = (z-w_k) e^{-\frac{\abs{w_k}^2+\abs{z}^2}{2}}\sum_{n=0}^{N} \frac{\overline{w_k}^n}{\sqrt{n!}}  m_n.
	\end{align*}
 instead of $\eta_{w_k}$ and $d\eta_{w_k}$ for our ansatz, we can apply a similar argument to above to deduce that if $N$ is large enough, exact recovery from these measurements. Due to the similarity to the above argument, we choose to not include the details here. These will instead be included in the upcoming PhD thesis of one of the authors.
\end{remark}

\subsection*{Acknowledgement}

B. G. Bodmann's research was partially supported by NSF grant DMS-1715735.
A. Flinth acknowledges support from the Berlin Mathematical School, as well as  DFG Grant KU 1446/18-1.
G. Kutyniok acknowledges partial support by the Einstein Foundation Berlin, the DFG Collaborative Research 
Center TRR 109 ``Discretization in Geometry and Dynamics'' and the European Commission Project DEDALE (contract 
no. 665044) within the H2020 Framework Program, and DFG-SPP 1798 Grants KU 1446/21 and KU 1446/23. She also
thanks the Department of Mathematics at the University of Houston for its hospitality and support during her
visit.

\end{document}